\titleformat{\paragraph}[runin]{\normalfont\normalsize\itshape}{\theparagraph}{}{}[.] 
\titlespacing{\paragraph}{0pt}{0pt}{*1} 
\mathchardef\mhyphen="2D
\DeclareMathOperator{\Hom}{Hom}
\DeclareMathOperator{\pt}{pt}
\DeclareMathOperator{\Cat}{\mathbf{Cat}}
\DeclareMathOperator{\Ob}{Ob}
\DeclareMathOperator{\Exact}{Exact}
\DeclareMathOperator{\Grid}{Grid}
\DeclareMathOperator{\Rep}{Rep}
\newcommand{\Vect}{\mathbf{Vect}}
\newcommand{\VectNu}{\mathbf{Vect}_{\ZZ[\nu,\nu^{-1}]}}
\DeclareMathOperator{\Id}{Id}
\DeclareMathOperator{\Spaces}{Stacks}
\newcommand{\Stacks}{\Spaces}
\DeclareMathOperator{\sset}{sSet}
\DeclareMathOperator{\Corr}{Corr}
\newcommand{\CorrStacks}{\Corr(\Stacks)}
\DeclareMathOperator{\HallAlg}{H_{\CCC}}
\newcommand{\dashmod}{\text{-mod}}
\newcommand{\BigCorr}{\operatorname{CORR}}
\newcommand{\forceNewLine}{\leavevmode\newline}
\newcommand{\StackFunc}{\mathfrak{F}}
\DeclareMathOperator{\Kgroup}{K}
\newcommand{\Ainf}{A_{\infty}}
\DeclareMathOperator{\Dmod}{D-mod}
\DeclareMathOperator{\Ext}{Ext}
\DeclareMathOperator{\Spc}{Spc}
\newcommand{\Maps}{\mathcal{M}aps}
\newcommand{\CatTransfer}{T^{\Cat}}
\newcommand{\VectNuTransfer}{T^{\VectNu}}
\newcommand{\DKTransfer}{T^{DK}}
\newcommand{\KWeilTransfer}{T^W}
\DeclareMathOperator{\Perf}{Perf}
\newcommand{\AinfEnd}{\mathcal{E}nd}
\newcommand{\QuantumGroup}{U_{q}(\mathfrak{g})}
\newcommand{\Zquantum}{\dot{U}^{-}_{\nu}(\mathfrak{g}_{\Quiver})}
\newcommand{\CCC}{\mathcal{C}}
\newcommand{\PPP}{\mathcal{P}}
\newcommand{\SSS}{\mathcal{S}}
\newcommand{\DShladic}[1]{\mathcal{D}({#1}_{\kk})}
\newcommand{\DShcomplex}[1]{\mathcal{D}({#1}_{\CC})}
\newcommand{\QQQ}{\mathcal{Q}}
\newcommand{\Mod}{Mod}
\newcommand{\LusztigAlgebroid}{\mathfrak{L}}
\newcommand{\Lusztig}{\mathcal{L}}
\newcommand{\HallCat}{\mathcal{H}}
\newcommand{\Quiver}{\overrightarrow{Q}}
\newcommand{\geoHall}{H_{geo}}
\newcommand{\combHall}{H_{comb}}
\newcommand{\AAA}{\mathcal{A}}
\newcommand{\aug}[1]{A(#1)}
\newcommand{\grid}{\operatorname{grid}}
\newcommand{\OrdSet}{\mathbb{\Delta}}
\newcommand{\AugOrdSet}{\mathbb{\Delta}_+}
\DeclareMathOperator{\Tr}{Tr}
\DeclareMathOperator{\Frob}{Fr}
\newcommand{\FFF}{\mathcal{F}}
\newcommand{\OOO}{\mathcal{O}}
\newcommand{\FF}{\mathbbm{F}}
\newcommand{\NN}{\mathbbm{N}}
\newcommand{\ZZ}{\mathbbm{Z}}
\DeclareMathOperator{\point}{\pt}
\newcommand{\kk}{\mathbbm{k}}
\newcommand{\CC}{\mathbbm{C}}
\newcommand{\Alg}[1]{A_{#1}}
\newcommand{\ord}[1]{\left[ #1 \right] }
\newcommand{\ordop}[1]{\Delta_{#1} }
\newcommand{\field}{\mathbbm{k}}
\DeclareMathOperator{\Aut}{Aut}
\newcommand{\tik}{\begin{tikzcd}}
\newcommand{\tak}{\end{tikzcd}}
\newcommand{\Nerve}[1]{N(#1)}
\newcommand{\gridGR}[1]{\operatorname{CGrid}_{#1}}
\def\latearrow#1#2#3#4{%
  \toks@\expandafter{\tikzcd@savedpaths\path[/tikz/commutative diagrams/every arrow,#1]}%
  \global\edef\tikzcd@savedpaths{%
    \the\toks@%
    (\tikzmatrixname-#2)
    to%
    node[/tikz/commutative diagrams/every label] {$#4$}
    (\tikzmatrixname-#3)
;}}
\def\stik#1#2{
\begin{tikzpicture}[baseline= (a).base]%
\node[scale=#1] (a) at (0,0){%
\begin{tikzcd}[ampersand replacement=\&]%
#2
\end{tikzcd}%
};%
\end{tikzpicture}
}
\def\cnstik#1#2#3{
\begin{tikzpicture}[baseline= (a).base]%
\node[scale=#1] (a) at (0,0){%
\begin{tikzcd}[ampersand replacement=\&,column sep=#2em]%
#3
\end{tikzcd}%
};%
\end{tikzpicture}
}
\newtheorem{Theorem}{Theorem}
\newtheorem*{unnu-theorem}{Theorem}
\newtheorem{Proposition}{Proposition}[section]
\newaliascnt{Corollary}{Proposition}
\newtheorem{Corollary}[Corollary]{Corollary}
\newaliascnt{Lemma}{Proposition}
\newtheorem{Lemma}[Lemma]{Lemma}
\newtheorem* {Claim}{Claim}
\theoremstyle{definition}
\newaliascnt{Definition}{Proposition}
\newtheorem{Definition}[Definition]{Definition}
\theoremstyle{remark}
\newaliascnt{Notation}{Proposition}
\newtheorem{Notation}[Notation]{Notation}
\newaliascnt{Remark}{Proposition}
\newtheorem{Remark}[Remark]{Remark}
\newaliascnt{Example}{Proposition}
\newtheorem{Example}[Example]{Example}
\def\sectionautorefname~#1\null{%
\S#1\null
}
\def\equationautorefname~#1\null{%
(#1)\null
}
\begin{document}
\numberwithin{equation}{section}

\title{Hall categories and KLR categorification}
\author{Adam Gal, Elena Gal, Kobi Kremnizer}
\maketitle

\begin{abstract}
This paper is the first step in the project of categorifying the bialgebra structure on the half of quantum group $\QuantumGroup$ by using geometry and Hall algebras. We equip the category of D-modules on the moduli stack of objects of the category $\Rep_{\CC}(\Quiver)$ of representations of a quiver with the structure of an algebra object in the category of stable $\infty$-categories. The data for this construction is provided by an extension of the Waldhausen construction for the category $\Rep_{\CC}(\Quiver)$. We discuss the connection to the  Khovanov-Lauda-Rouquier categorification of  half of the quantum group $\QuantumGroup$ associated to the quiver $\Quiver$ and outline our approach to the categorification of the bialgebra structure.     
\end{abstract}

\tableofcontents

\section{Introduction}

This work is part of a program to categorify quantum groups and their representations using geometry.

The link between quantum groups and geometry was made in the works of Ringel and Lusztig. In \cite{ringel1990hall} Ringel famously related the half of the quantum group $U_{\sqrt{q}}(\mathfrak{g}_{\Quiver})$ associated to a quiver $\Quiver$ to the \emph{Hall algebra} of the category $\Rep_{\FF_q}(\Quiver)$. 
Slightly later Lusztig's work involving perverse sheaves on stacks of objects of the category $\Rep_{\FF_q}(\Quiver)$ lead to the discovery of a canonical basis for the quantum group. 
Related geometric considerations were used by Khovanov, Lauda and Rouquier in \cite{KL1,KL2,R} to construct categorified representations of quantum groups. 

In the present work we extend Lusztig's construction in  (\cite{LusztigCanonicalBases},\cite{LusztigQuivers},$\ldots$) to the $\infty$-categorical setting.
It is known from Lusztig's work that certain moduli spaces of flags in $\Rep_{\FF_q}(\Quiver)$ can be used to construct an additive monoidal category which categorifies $U^-_{\sqrt{q}}(\mathfrak{g}_{\Quiver})$. The underlying idea of our present work is that the Waldhausen construction for the category of representation of the quiver $\Quiver$ precisely encodes the information about the monoidal structure on a certain stable $\infty$-category which we call $\HallCat$.
The category $\HallCat$ is therefore a natural object to consider if one wants to categorify the bialgebra structure on $U^-_{\sqrt{q}}(\mathfrak{g}_{\Quiver})$. Our construction of this monoidal structure is given in a language that allows for a natural extension that incorporates the categorical version of the bialgebra condition as we elaborate below in \autoref{Int:bialgebra}.
The bialgebra structure is important because it is instrumental for the construction of the tensor product of categorified representations of quantum groups.

We construct a stable monoidal $\infty$-category $\HallCat$, whose Grothendieck K-group satisfies 
\[K(\HallCat)\cong \dot{U}^-_{\sqrt{q}}(\mathfrak{g}_{\Quiver})\]
The above isomorphism is an isomorphism of algebras. The monoidal structure on $\HallCat$, i.e. the product and the data of the higher coherences is given by the Waldhausen construction for the category $\Rep_{\CC}{\Quiver}$ of complex representations of the quiver $\Quiver$. We use the notion of a \emph{2-Segal space} introduced by Dyckerhoff and Kapranov in \cite{KapranovDyckerhoff} to show that the Waldhausen construction produces an $(\infty,1)$- category of correspondences of stacks which encodes the above monoidal structure in $\autoref{geoHall}$. This system of correspondences equips the stable $\infty$-category of D-modules on the stack of objects of $\Rep_{\CCC}(\Quiver)$ with a product satisfying natural system of coherences as shown in \autoref{Transfer}.

We explain the relation of $\HallCat$ to the Khovanov-Lauda-Rouquier (KLR) categorification of quantum groups in \autoref{secKLR}. To summarize, $\HallCat$ is equivalent to the  stable $\infty$-category of perfect modules over a KLR algebra rather than the category of graded modules as in the KLR approach. 

The natural next step is the the construction of the categorified bialgebra structure on $\HallCat$. We expect that the above approach would have an advantage in this respect since, as we explain in \autoref{subKLR}, passing to the KLR construction requires formality of certain bimodules $M_{\nu,\nu'}$ defining the product on $\HallCat$ and it is unclear whether the counterparts of $M_{\nu,\nu'}$ for the co-multiplication are formal and also whether the bimodules expressing the bialgebra condition are formal in this picture (formality in this situation is deduced from purity, which is only assured for the multiplication. cf. \cite[\S3.7]{SchiffmannHallCategory}). If these modules indeed aren't formal there is little hope to construct the bialgebra structure in the KLR language (\autoref{remFormality}).

In more detail, the algebra structure on $\HallCat$ is given by a monoidal functor of $(\infty,1)$-categories 
\[N(\AugOrdSet) \rightarrow \Cat, \ord{1}\mapsto\HallCat\]
where $N(\AugOrdSet)$ denotes the nerve of the category of ordered sets with product given by union of sets and $\Cat$ is the category of stable $\infty$-categories. The construction of this functor is given in essentially combinatorial terms (see $\autoref{Hcomb}$). The construction of the bialgebra structure involves extending this functor to the monoidal category generated by two copies of $\AugOrdSet$.  This category can be thought of as the category whose objects are posets or more precisely "grids". Hence we need to extend our construction from linearly ordered sets to grids.
It is easy to generalize the formulas used in $\autoref{Hcomb}$ so that they  would apply to this class of posets. However to prove that this extension produces a monoidal functor one needs to introduce some additional considerations. This is the subject of our article  \cite{ourGeometricHall2}.

\subsection*{Background}
Lusztig's construction of the canonical basis for $U_{\sqrt{q}}(\mathfrak{g}_{\Quiver})$ essentially involved constructing an additive category $\QQQ$, equipped with product and co-product categorifying multiplication and co-multiplication of $U_{\sqrt{q}}(\mathfrak{g}_{\Quiver})$ (see for example \cite{SchiffmannHallCategory} for a concise presentation of this point of view). The word "categorifying" here precisely means that 
\[K(\QQQ)\cong \dot{U}^-_{\sqrt{q}}(\mathfrak{g}_{\Quiver})\]
where the above is an isomorphism of algebras and co-algebras. The category $\QQQ$ is usually called \emph{Hall category} in the literature. An article of Varagnolo and Vasserot \cite{VaragnoloVasserot} relates $\QQQ$ to the work of Khovanov, Lauda and Rouquier on the categorification of quantum groups and their representations. A missing piece of the puzzle (as far as the autors are aware) is the relation between the multiplication and co-multiplication on the categorical level.

It is known that the multiplication and co-multiplication in the Hall algebra associated to  $\Rep_{\FF_q}(\Quiver)$ are related by Green's theorem\cite{Green}, making it into a bialgebra "up to a twist". 
In \autoref{Int:green twist} we make the observation that the geometric origin of this twist lies in the relation between the upper star and upper shriek pull-back functors between sheaves on certain stacks associated to $\Rep_{\FF_q}(\Quiver)$. 
Expressing this bialgebra condition on the categorical level is a step on the way to the categorification of the tensor product of categorified representations of quantum groups constructed in \cite{KL1,KL2,R}.
In other words, one would like to construct a "higher bialgebra" categorifying the bialgebra $U^-_q(\mathfrak{g}_{\Quiver})$.  

To achieve this objective one needs to use more sophisticated machinery than the one involved in the construction of $\QQQ$.
For technical reasons having to do with using adjoint functors and base change it is natural to work in the world of dg- or stable $\infty$-categories rather than with derived categories (of sheaves on our geometric objects). In essence one needs to consider a smallest stable $\infty$-category containing $\QQQ$.

Working with $\infty$-categories comes at the cost of needing to specify higher coherences. For example to equip such a category with a product one needs to provide all higher coherences in place of providing the associator and checking the pentagon axiom. In the categorification of the quantum group the product is constructed by considering it as a Hall algebra associated to the category $\Rep(\Quiver)$. Luckily, the construction of the categorified Hall algebra product for this category has all these higher coherences naturally "built in". Essentially they are provided by the Waldhausen construction for this category. This natural higher associativity structure for various examples of Hall algebras is studied in the work \cite{KapranovDyckerhoff} by Dyckerhoff and Kapranov. 

The main deficit we aim to cover in this article is the current lack of a coherent (and straightforward) construction of this system of higher coherences, the goal being to extend this system to a higher bialgebra structure. This is expanded on more in \autoref{Int:bialgebra}.

\subsection{Higher Associativity and 2-Segal conditions}
The insight of \cite{KapranovDyckerhoff} was to split the associative algebra construction for the Hall algebra into geometric and algebraic parts. In the geometric part associativity is expressed by\emph{ 2-Segal conditions} introduced in \cite{KapranovDyckerhoff} and, independently, under the name of \emph{decomposition spaces} in the series of articles \cite{decomposition1,decomposition2,decomposition3}. 

Denote the stacks of objects and exact sequences (or length 1 flags) in $\Rep(\Quiver)$ as $\Ob$ and $\Exact$ respectively. Consider the correspondence
\[
\stik{1}{
{} \& \Exact \ar{dl}[above, xshift=-0.5em]{end} \ar{dr}{mid} \& {} \\
\Ob\times \Ob \& {} \& \Ob
}
\]
This correspondence is a geometric object which encodes multiplication in the Hall category. Namely, by taking the composition of appropriate pull-and push- functors we obtain the product $\AAA\otimes\AAA\rightarrow\AAA$, where $\AAA$ is the category of constructible $l$-adic sheaves when the representations are taken over $\FF_q$, as in the work of Lusztig, or D-modules when the representations are taken over $\CC$ as in this article.

Moreover, the associativity of this product is given by considering the stack of flags of length 2.
The fact that it defines an associator morphism is a consequence of the Waldhausen simplicial stack for $\Rep(\Quiver)$ being \emph{2-Segal}, and base change for our choice of push- and pull-functors. Higher associators are represented by longer flags.

In the present work we assemble this data into a functor 
\[N(\AugOrdSet) \rightarrow \Corr(\Stacks)\] in \autoref{geoHall}. Here $\Corr(\Stacks)$ is an $(\infty,1)$-category of correspondences with morphisms being grids of pull-back squares. As such $\Corr(\Stacks)$ is well adapted to turning composition of correspondences into an associative composition of regular morphisms. $\Corr(\Stacks)$ appears in the work \cite{gaitsbook} by Gaitsgory and Rozenblyum, and we use their results to define a monoidal functor from $\Corr(\Stacks)$ to stable $\infty$-categories, which assigns to a stack its category of D-modules. Our main result in \autoref{geoHall} is \autoref{2Segalequivalent} which shows that our construction produces a monoidal functor of $(\infty,1)$-categories iff the simplicial Waldhausen stack is 2-Segal. 

As we mention above, to extend this to a bialgebra construction we need more involved combinatorics. 
To this end we consider the construction called the \emph{abstract Hall algebra} in \cite{KapranovDyckerhoff} and split it into a \emph{combinatorial} and a \emph{geometric} part.
The combinatorial Hall algebra $\combHall$ is introduced in \autoref{Hcomb}. 
Working with $\combHall$ makes the construction of the algebra structure more transparent, but more importantly it is indispensable for our construction of the bialgebra structure in \cite{ourGeometricHall2}. 

\subsection{The bialgebra structure}
\label{Int:bialgebra}
We would like to advertise here the next paper in our project \cite{ourGeometricHall2}. 
Consider the categorified bialgebra structure given by extending the functor defining the algebra structure on $\HallCat$ to a functor from the category $\AugOrdSet\otimes\AugOrdSet$ of grid posets.

The first nice thing is that in this way we naturally recover constructions that appear in the literature, and place them in an inherently coherent framework.
For example: The stack $\Grid$ of $3\times 3$ grids of short exact sequences appears in the proof of \cite[Theorem 1]{Green} about the twisted bialgebra structure of the Hall algebra and also naturally appears in our construction as part of the data of a bialgebra object.
Specifically it appears as the image of the $2\times 2$ grid poset.

The next upshot is that this point of view naturally provides us with a candidate for a 2-morphism which replaces the bialgebra relation on the categorical level.
It is also easy to show that this morphism is an isomorphism. Let us explain this in more detail:

The categorification of the bialgebra condition can be expressed using the notion of Beck-Chevalley square. In \cite{ourSSH} we proposed a notion of higher bialgebra which in rough terms is given by the following: 

Consider a category $\CCC$ supplied with two multiplicative (i.e. monoidal) structures $m_1,m_2$ and a square 
\begin{equation}
    \label{Eqn:GreenSquare}
\stik{1}{
\CCC^{\otimes 4} \ar{r}{m_1^{\otimes 2}} \ar{d}[left]{m_2^{\overline{\otimes 2}}} \& \CCC^{\otimes 2} \ar{d}{m_2} \arrow[shorten >=0.4cm,shorten <=0.4cm,Rightarrow]{dl}[above,sloped]{\alpha}\\
\CCC^{\otimes 2} \ar{r}{m_1} \& \CCC}
\end{equation}
with the \emph{condition} that this square is Beck-Chevalley, i.e. that adjoints of the functors exist, and that replacing verticals (or equivalently horizontals) with adjoints yields an isomorphism.

It seems that in various cases the construction of the square involving multiplications is more natural than one involving a multiplication and comultiplication and in this way it also is easier to construct a coherent system.

In the case of the Hall algebra both multiplications are the same and the condition we need to check comes down to checking the Beck-Chevalley condition for D-modules on the stacks forming the square:
\begin{equation}
\label{Int:GridSquare}
\stik{1}{
\Grid \ar{r}{q_1} \ar{d}{q_2} \& (\Exact)^2 \ar{d}{p_2} \\
(\Exact)^2 \ar{r}{p_1} \& (\Ob)^4
}
\end{equation}
where the maps from $\Grid$ are the projections to the top and bottom or left and right short exact sequences.

This square has the property that the canonical map $i$ from $\Grid$ to the pullback of the lower right corner has $i^*$ fully faithful (an explicit computation using this map appears in \cite[\S 2.4]{Dyckerhoff}). This implies that the square of categories

\[
\stik{1}{
\Dmod(\Grid) \ar[leftarrow]{r}{q_1^*} \ar[leftarrow]{d}{q_2^*} \& \Dmod((\Exact)^2) \ar[leftarrow]{d}{p_2^*} \arrow[shorten >=0.4cm,shorten <=0.4cm,equals]{dl}[above,sloped]{\sim}\\
\Dmod((\Exact)^2) \ar[leftarrow]{r}{p_1^*} \& \Dmod((\Ob)^4)
}
\]
is Beck-Chevalley since the associated square
\[
\stik{1}{
\Dmod(\Grid) \ar{r}{{q_1}_*} \ar[leftarrow]{d}{q_2^*} \arrow[shorten >=0.4cm,shorten <=0.4cm,equals]{dr}[above,sloped]{\sim} \& \Dmod((\Exact)^2) \ar[leftarrow]{d}{p_2^*} \\
\Dmod((\Exact)^2) \ar{r}{{p_1}_*} \& \Dmod((\Ob)^4)
}
\]
commutes.

\subsection{The twist from Green's Theorem}

\label{Int:green twist}
In \cite[Theorem 1]{Green} there appears a twist of the multiplication by a power of a constant (the square root of the size of the field) which is required to make the Hall algebra into a bialgebra. To give an example of the power of the geometric approach let us explain how to recover this twist as a straightforward consequence of the higher bialgebra condition together with an essentially arithmetical computation.

Note that the comultiplication of the Hall algebra involves not lower star but lower shriek functors. In the case when $\CCC$ is hereditary, the maps in the square \autoref{Int:GridSquare} are all smooth (see \autoref{HereditarySm-PrProposition}), and so upper star has a left adjoint which is the shift of lower shriek by twice the relative dimension of the map. Since the right Beck-Chevalley condition implies the left one, we get an isomorphism 
\[
\stik{1}{
\Dmod(\Grid) \ar[leftarrow]{r}{q_1^*} \ar{d}[left]{{q_2}_![2d_{q_2}]} \arrow[shorten >=0.4cm,shorten <=0.4cm,equals]{dr}[above,sloped]{\sim} \& \Dmod((\Exact)^2) \ar{d}{{p_2}_![2d_{p_2}]} \\
\Dmod((\Exact)^2) \ar[leftarrow]{r}{p_1^*} \& \Dmod((\Ob)^4)
}
\]

A straightforward computation shows that $2d_{q_2}-2d_{p_2}$ is exactly the power involved in Green's twist.

\begin{Remark}
In the above argument we used the hereditary property of $\CCC$, just as is required in Green's theorem. This was necessary in order to deduce a relation on the level of vector spaces, as the lower star functor doesn't in general translate to anything reasonable on that level, but the lower shriek does. However, it is reasonable to expect that in more general situations we will be able to produce interesting relations using this same right mate-left mate argument.
\end{Remark}

\subsection{Further Remarks}
In this article we are concerned with the construction of Waldhausen stacks and Hall categories for $\Rep_{\CC}(\Quiver)$. However the fact that the Waldhausen simplicial stack is 2-Segal is true for a larger class of categories, see \cite{KapranovDyckerhoff}. Thus our construction of a "higher Hall algebra" structure works in greater generality than the category of representations of a quiver. For example one can apply this construction to obtain a stable $\infty$- Hall category associated to to the category of coherent sheaves on a smooth projective curve $X$ (see \cite{schiffmann2006canonical,SchiffmannHallCategory} for more details on Hall category associated to this object). 

Since the introduction of the 2-Segal/decomposition spaces there appeared numerous works developing this circle of ideas which we cannot adequately survey here. Our article \cite{ourCubes} and also \cite{Penney2Segal} carry out a similar construction to \autoref{Hcomb} from a more abstract point of view.
\section{Notations}

\begin{itemize}
\item $\OrdSet$ - the category of finite ordered sets. The elements of $\OrdSet$ will be denoted by \[\ordop{0}=0, \ordop{1}=0\rightarrow 1, \ordop{2}=0\rightarrow 1\rightarrow 2, \ldots\]
\item $\AugOrdSet$ - the augmented category of finite ordered sets. The elements of $\AugOrdSet$ will be denoted by \[\ord{0}=\emptyset, \ord{1}=\{0\}, \ord{2}=\{0\rightarrow 1\}, \ord{3}=\{0\rightarrow 1\rightarrow 2\}, \ldots\]
\item $\Quiver=(I,H,s,t)$ - a quiver with set of vertices $I$, set of edges $H$ and source and target maps $s$ and $t$. We don't allow loop edges in $\Quiver$.
\item $\kk$ - the algebraically closed field $\overline{\FF_q}$
\item For a theory with transfer $T$ we denote the resulting algebra object by $\Alg{T}$ (see \autoref{def:AlgebraObject}).
\end{itemize}
\section{Background}

\subsection{Waldhausen stacks}
\label{WaldhausenStacksSection}
The classical Waldhausen construction \cite{Waldhausen} associates a simplicial groupoid to an exact category. This construction is of interest to us since, as noted in \cite{KapranovDyckerhoff}, it can be used to define multiplication in the Hall algebra and provide the associativity data for it. We describe a version of this construction producing a system of stacks associated to $\CCC:=\Rep_{\CC}{\Quiver}$ - the category of representations of the quiver $\Quiver$.

Our construction will use the notion from \cite[\S 7.1]{JoyceConfigurations} of a functor of \emph{families of objects} in $\CCC$. Consider the functor $U\mapsto \FF_{\CCC}(U)$ where $\FF_{\CCC}(U)$ is the exact category of all locally free sheaves on $U$ equipped with an action of the path algebra of the quiver. It is shown in \cite{JoyceConfigurations} that $\FF_{\CCC}$ is a functor of families of objects. In particular it means that $\FF_{\CCC}$ is a stack of exact categories, satisfying some flatness conditions outlined in \cite{JoyceConfigurations}.
This example generalizes to any finitely generated associative algebra.

We define a system of stacks $S_{\CCC}:\OrdSet^{op}\to \Spaces$ as follows:

\begin{enumerate}
    \item Take $X\in\OrdSet$
    \item Consider $\grid(X):=\Hom(0\to 1,X)$, as a marked category, with the constant maps the marked objects. Note that $\grid(X)$ has two classes of maps, the "horizontal" and the "vertical", by which we mean maps that are identity on the 0 or 1 component, respectively. 
    \item Define $S_X\CCC$ to be the stack of maps from $\grid(X)$ to $\CCC$ which take the marked objects to $0$, the horizontal maps to monomorphisms and the vertical maps to epimorphisms, and take Cartesian squares to Cartesian squares. More precisely, the stack $S_X\CCC$ is given by the assignment sending $U$ to the space of maps as above from $\grid(X)$ to $\FF_\CCC(U)$.

\end{enumerate}
It is shown in \cite{JoyceConfigurations} that the family of moduli stacks defined above in 3. are \emph{Artin} stacks locally of finite type. In this article we will consider them as objects in the $\infty$-category of derived stacks (see \cite{hag2}) which we denote $\Stacks$.  

\begin{Example}
Take $X=\ordop{1}=0\to 1$, then \[
\grid(X)=\stik{1}{
00 \ar{r} \& 01 \ar{d} \\
{} \& 11
}
\]
and so $S_X\CCC=S_{\ordop{1}}\CCC$ is the stack of objects of $\CCC$. In other words, $S_{\ordop{1}}\CCC$ is the moduli space of representations of the quiver $\Quiver$, i.e. a disjoint union of quotient stacks. 
\end{Example}

\begin{Example}
Take $X=\ordop{2}=0\to 1\to 2$, then  \[
\grid(X)=\stik{1}{
00 \ar{r} \& 01 \ar{r} \ar{d} \& 02 \ar{d} \\
{} \& 11 \ar{r} \& 12 \ar{d} \\
{} \& {} \& 22
}
\]
The data of a map $\grid(X)\to\CCC$ then consists of a square\[
\stik{1}{
C_{01} \ar[hook]{r} \ar[two heads]{d} \& C_{02} \ar[two heads]{d}\\
C_{11}=0 \ar[hook]{r} \& C_{12}
}
\]
which must be Cartesian and therefore also coCartesian. This just means that $C_{01}\to C_{02} \to C_{12}$ is an exact sequence.
In all, $S_X\CCC=S_{\ordop{2}}\CCC$ is the stack of exact sequences in $\CCC$, or in other words moduli space of flags of length 1.
\end{Example}
It is now easy to guess the general shape of $S_X\CCC$. Namely, for $X=\ordop{n}$ we get the diagrams of the form 
\[
\stik{1}{
0 \ar[hook]{r} \& C_{01} \ar[two heads]{d} \ar[hook]{r} \& C_{02}\ar[two heads]{d} \ar[hook]{r} \& \cdots  \ar[hook]{r} \& C_{0n}\ar[two heads]{d} \\
{} \& 0  \ar[hook]{r} \& C_{12}\ar[two heads]{d} \ar[hook]{r} \& \cdots  \ar[hook]{r} \& C_{1n}\ar[two heads]{d} \\
{} \& {} \& 0 \ar[hook]{r} \& \cdots  \ar[hook]{r} \& C_{2n}\ar[two heads]{d}\\
{} \& {} \& {} \& \ddots \& \vdots\ar[two heads]{d} \\
{} \& {} \& {} \& {} \& 0
}
\]
where every square is biCartesian.

It is shown in \cite{KapranovDyckerhoff} [Lemma 2.4.9] that the groupoid of diagrams of this shape is equivalent to the groupoid of flags of length $n-1$ providing the connection to the classical Waldhausen construction. The argument generalizes to stacks in a straightforward manner. Hence $S_{\ordop{n}}\CCC$ is isomorphic to a moduli stack of flags of length $n$ in $\CCC$. We note that these stacks are disjoint unions of quotient stacks.

In the sequel we will use the  following extended version of Waldhausen construction:

\begin{Lemma}
\label{S_ext}
The right Kan extension of $S$ along the Yoneda embedding functor $\OrdSet^{op} \rightarrow \sset^{op}$ exists.
\end{Lemma}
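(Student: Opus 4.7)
The plan is to invoke the pointwise formula for right Kan extensions in the $\infty$-categorical setting. Writing $y : \OrdSet \hookrightarrow \sset$ for the Yoneda embedding and $y^{op}$ for its opposite, I would define, for each simplicial set $K$,
\[
S^{ext}_{\CCC}(K) \;=\; \lim_{(\OrdSet_{/K})^{op}} S_{\CCC}
\]
where $\OrdSet_{/K}$ is the category of simplices of $K$ (objects: pairs $([n],\Delta^n\to K)$). Morphisms out of $K$ in $\sset^{op}$ correspond to morphisms into $K$ in $\sset$, so unwinding the definitions this is exactly the indexing diagram prescribed by the pointwise formula for $\operatorname{Ran}_{y^{op}} S_{\CCC}$ evaluated at $K$.

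The first step is to note that for every simplicial set $K$ the slice $\OrdSet_{/K}$ is an honest small category, since $K$ has only a set of simplices in each dimension. The second step is to observe that $\Stacks$, being the $\infty$-category of derived stacks in the sense of \cite{hag2}, is presentable and in particular admits all small limits. Combining these two facts with the standard existence result for right Kan extensions in $\infty$-categories (e.g.\ \cite[Lemma 4.3.2.13]{} and the surrounding discussion in HTT), the pointwise limit above exists for every $K$ and assembles into a functor $\sset^{op}\to\Stacks$ exhibiting the desired right Kan extension.

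The only genuine thing to check, and what I would treat as the main step, is that this formula really is functorial in $K$ in the $\infty$-categorical sense, i.e.\ that a map $K\to K'$ induces a map on the indexing categories $\OrdSet_{/K}\to \OrdSet_{/K'}$ and hence, by pullback, a coherent map between the corresponding limits; this then matches the universal property of the Kan extension. Since the Yoneda embedding $y$ is fully faithful, one also recovers $S^{ext}_{\CCC}(\Delta^n)\simeq S_{\CCC}([n])$ from the fact that $([n], \mathrm{id}_{\Delta^n})$ is a terminal object of $\OrdSet_{/\Delta^n}$, so the extension genuinely extends $S_{\CCC}$. No obstruction of substance arises; the lemma is essentially a smallness/completeness observation packaged in $\infty$-categorical language.
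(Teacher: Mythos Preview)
Your proposal is correct and follows essentially the same approach as the paper: the paper's proof is the single sentence ``The category $\Stacks$ is complete,'' and your argument is precisely the unpacking of this, verifying that the indexing categories in the pointwise right Kan extension formula are small and that $\Stacks$ admits the requisite small limits. The additional remarks about functoriality and about recovering $S_\CCC$ on representables via full faithfulness of Yoneda are standard consequences and not something the paper spells out, but they add no new ingredient beyond completeness.
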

\begin{proof}
The category $\Spaces$ is complete.
\end{proof}
\begin{Notation}
From now on we will shorten $S_{\ordop{n}}\CCC$ to $S_{n}$.
\end{Notation}
\subsection{The 2-Segal conditions}
\label{2Segal}
2-Segal conditions were introduced in \cite[\S 2.3]{KapranovDyckerhoff}. Let us formulate this notion for a simplicial stack $S$.

Let $S$ be a functor $S:\OrdSet^{op}\to\Stacks$, and $P$ a polygonal decomposition of an $n$-gon, written as $(P_1,\ldots,P_k)$. e.g.
\[
\begingroup%
  \makeatletter%
  \providecommand\color[2][]{%
    \errmessage{(Inkscape) Color is used for the text in Inkscape, but the package 'color.sty' is not loaded}%
    \renewcommand\color[2][]{}%
  }%
  \providecommand\transparent[1]{%
    \errmessage{(Inkscape) Transparency is used (non-zero) for the text in Inkscape, but the package 'transparent.sty' is not loaded}%
    \renewcommand\transparent[1]{}%
  }%
  \providecommand\rotatebox[2]{#2}%
  \ifx\svgwidth\undefined%
    \setlength{\unitlength}{156.92625631bp}%
    \ifx\svgscale\undefined%
      \relax%
    \else%
      \setlength{\unitlength}{\unitlength * \real{\svgscale}}%
    \fi%
  \else%
    \setlength{\unitlength}{\svgwidth}%
  \fi%
  \global\let\svgwidth\undefined%
  \global\let\svgscale\undefined%
  \makeatother%
  \begin{picture}(1,0.75123049)%
    \put(0,0){\includegraphics[width=\unitlength,page=1]{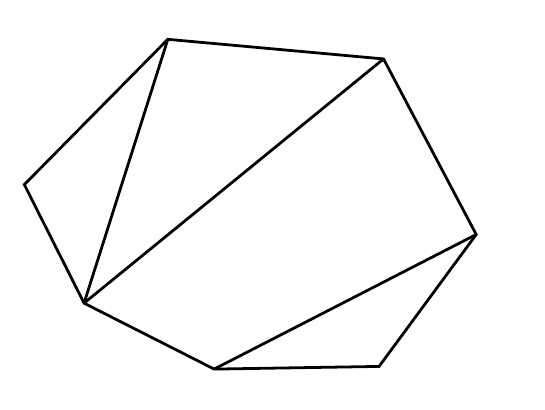}}%
    \put(0.12172059,0.40007091){\color[rgb]{0,0,0}\makebox(0,0)[lb]{\smash{}}}%
    \put(0.08573423,0.39352797){\color[rgb]{0,0,0}\makebox(0,0)[lb]{\smash{$P_1$}}}%
    \put(0.32618869,0.52438755){\color[rgb]{0,0,0}\makebox(0,0)[lb]{\smash{$P_2$}}}%
    \put(0.44069086,0.29701911){\color[rgb]{0,0,0}\makebox(0,0)[lb]{\smash{$P_3$}}}%
    \put(0.5748219,0.1203587){\color[rgb]{0,0,0}\makebox(0,0)[lb]{\smash{$P_4$}}}%
    \put(-0.0042317,0.41315677){\color[rgb]{0,0,0}\makebox(0,0)[lb]{\smash{$0$}}}%
    \put(0.26893765,0.71249808){\color[rgb]{0,0,0}\makebox(0,0)[lb]{\smash{$1$}}}%
    \put(0.7073172,0.67651165){\color[rgb]{0,0,0}\makebox(0,0)[lb]{\smash{$2$}}}%
    \put(0.90687802,0.32482657){\color[rgb]{0,0,0}\makebox(0,0)[lb]{\smash{$3$}}}%
    \put(0.67460231,0.02712161){\color[rgb]{0,0,0}\makebox(0,0)[lb]{\smash{$4$}}}%
    \put(0.3327317,0.0074925){\color[rgb]{0,0,0}\makebox(0,0)[lb]{\smash{$5$}}}%
    \put(0.06937679,0.14489469){\color[rgb]{0,0,0}\makebox(0,0)[lb]{\smash{$6$}}}%
  \end{picture}%
\endgroup%

\]

We define $S_P$ to be $S_{P_1}\times_{S_{P_1\cap P_2}} S_{P_2}\times_{S_{P_2\cap P_3}} S_{P_3}\ldots \times_{S_{P_{k-1}\cap P_k}} S_{P_k}$ where $S_{P_i}$ is $S_{\ordop{\#\{\text{vertices of }P_i\}}}$ and note that by functoriality we have a natural map
$S_{\ordop{n}}\xrightarrow{\alpha_P} S_P$.

\begin{Definition}
\label{2SegalAdaptednessDef}
We say that $S$ is \emph{adapted} to a polygonal decomposition $P$ if the map $\alpha_{P}$ is a weak equivalence
\end{Definition}

\begin{Definition}
A functor $S:\OrdSet^{op}\to\Stacks$ is said to be a 2-Segal stack if $S$ is adapted to any polygonal decomposition $P$ of an $n$-gon for any $n\geq 3$.
\end{Definition}

In \cite[2.4.8]{KapranovDyckerhoff} the authors prove that the simplicial groupoid given by the Waldhausen construction for an exact category is 2-Segal. Their proof clearly generalizes to simplicial stacks defined in \autoref{WaldhausenStacksSection}.

\subsection{The category of correspondences}
\label{CorrespondencesSection}
We would like to define several kinds of categories of correspondences that we will use in our constructions.

Let $\CCC$ be an $(\infty,1)$-category, and $\SSS,\PPP$ two classes of maps closed on composition in $\CCC$. Let us briefly recall the construction of the $(\infty,1)$ category of correspondences in $\CCC$ from \cite[Chapter V.1]{gaitsbook}.

$\Corr{\CCC}_{\SSS,\PPP}$ is a \emph{complete Segal space} i.e. an object in $\Spc^{\OrdSet^{op}}$ defined as follows:

Consider the category $\grid(\ordop{n})$ of \autoref{WaldhausenStacksSection}. Reversing the horizontal arrows yields a category $\gridGR{n}$. We define \[
\left(\Corr{\CCC}_{\SSS,\PPP}\right)_{n}=\Maps_{\SSS,\PPP}(\gridGR{n},\CCC)
\]
where the notation $\Maps_{\SSS,\PPP}$ means the space of maps sending horizontal maps to $\SSS$, vertical maps to $\PPP$, and squares to pullback squares. i.e. we have diagrams of the form 
\[
\stik{1}{
\bullet  \& \bullet \ar{l}{s} \ar{d}{p} \& \bullet \ar{d}{p} \ar{l}{s} \ar[no head,dotted]{r}\& \bullet \ar{d}{p} \& \ar{l}{s}\bullet \ar{d}{p}\\
{}  \& \bullet \& \bullet  \ar[no head,dotted]{r} \ar[no head,dotted]{dr}\ar{l}{s} \& \bullet \ar[no head,dotted]{d}  \& \bullet \ar{l}{s} \ar[no head,dotted]{d}\\
{} \& {} \& {} \& \bullet \& \bullet \ar{l}{s} \ar{d}{p}\\
{} \& {} \& {} \& {} \& \bullet
}
\]
The face and degeneracy maps are defined by restrictions and insertions.
\begin{Definition}\forceNewLine
\label{def:corr}
Denote by $\Corr(\Stacks)$ the $(\infty,1)$-category $\Corr(\Stacks)_{all,all}$.
\end{Definition}
In this article we will also consider $\SSS$ and $\PPP$ smooth and proper morphisms of stacks respectively in \autoref{Transfer}. The idea is that $\SSS$ maps are well adapted to pullback, $\PPP$ maps are well adapted to pushforward. Altogether this gives the data required for turning a composition of correspondences into a composition of regular morphisms.

For purposes of the construction in \autoref{geoHall} we also introduce the following:

\begin{Definition}
Let $\CCC$ be an an $(\infty,1)$-category (resp. an ordinary category). Denote by $\BigCorr(\CCC)$ the simplicial space (the simplicial set) defined in the same way as $\Corr{\CCC}_{all,all}$ but without the pullback requirement.
\end{Definition}

\section{The geometric construction for Hall categories}
\label{geoHall}

\subsection{The geometric approach to multiplication in Hall categories}

The information about multiplication and co-multiplication in Hall algebra or Hall category is encoded geometrically by the correspondence 

\[
\stik{1}{
{} \& S_2 \ar{dl}[above, xshift=-0.5em]{end} \ar{dr}{mid} \& {} \\
S_1\times S_1 \& {} \& S_1
}
\]
where the stacks $S_1$ and $S_2$ are the stacks of objects of $\CCC$ and  short exact sequences of $\CCC$ respectively, given by the Waldhausen construction and the maps $mid:S_2 \to S_1$ and $end:S_2 \to S_1\times S_1$ are given by 
\begin{align*}
    mid(0\to U\to V\to W\to 0)&=V\\
    end(0\to U\to V\to W\to 0)&=(U,W)
\end{align*}

Our objective is to explicitly present the associativity data for this multiplication in the $(\infty,1)$-category $\Corr(\Spaces)$ from \autoref{CorrespondencesSection}. The article \cite{KapranovDyckerhoff} introduces the notion of 2-Segal conditions (we recall their definition in \autoref{2Segal}) and explores various examples of Hall algebra-like constructions in which these conditions imply (higher) associativity.
In the present work we use this notion to construct a functor of  $(\infty,1)$-categories $\AugOrdSet \rightarrow \Corr(\Spaces)$. The image of $\ord{1}$ is the stack of objects of $\CCC$, and thus it is endowed with the structure of algebra in $\Corr(\Stacks)$.

Let us further elaborate on the associativity data in the language of correspondences.
The usual approach to associativity for Hall algebras in the literature can be explained as follows. In the category of correspondences the associativity square is
\[
\stik{1}{
S_1^3 \& \& S_2\times S_1 \ar{ll} \ar{rr} \& \& S_1\times S_1 \\
S_1\times S_2 \ar{u} \ar{d} \& P_2 \ar{dr} \ar{l} \& {} \& P_1 \ar{ul} \ar{r}\&  S_2 \ar{u} \ar{d} \\
S_1\times S_1 \& \& S_2 \ar{ll} \ar{rr} \& \& S_1
}
\]
To show that this square commutes in the 1-category of correspondences one takes pullbacks $P_1$ and $P_2$ corresponding to compositions of the upper and right and left and lower sides and shows that they are isomorphic.
In \cite{KapranovDyckerhoff} it is shown that such isomorphism can be constructed whenever the simplicial stack $S$ associated to the category $\CCC$ is 2-Segal using relations between $S_1,S_2,S_3$.

One way to think about the higher associativity conditions is to consider higher dimensional cubes of correspondences.
We take this approach in \cite{ourCubes}. However to further use this data while working with the categories of sheaves on stacks one needs to formulate the necessary statements in the language of cubical sets model for $\infty$-categories rather than the overwhelmingly more common Kan simplicial set/complete Segal spaces.
Since we use results from the book  \cite{gaitsbook} which is written with the latter model in mind we will use this approach here.

In the next section we will describe a combinatorial construction of a system of correspondences of simplicial sets given by a map
\[N(\AugOrdSet) \rightarrow \BigCorr(\sset^{op})\] 
Applying the extended Waldhausen construction we obtain a map \[N(\AugOrdSet) \rightarrow \BigCorr(\Stacks)\] It turns out that if the simplicial stack $S$ is 2-Segal, the image of this map lands in the monoidal $(\infty,1)$-category $\Corr(\Stacks)$ (\autoref{thmAlgebra}).
This allows us to use the machinery developed in \cite{gaitsbook} to translate this to the monoidal structure on the $(\infty,1)$-category of D-modules on the stack of objects of a category in \autoref{Transfer}. 

\subsection{The combinatorial construction}
\label{Hcomb}

The functor
\[N(\AugOrdSet) \rightarrow \BigCorr(\Stacks)\]
is constructed essentially in a combinatorial way. Namely we construct the map of simplicial sets from the $N(\AugOrdSet)$ - the nerve of the augmented category of ordered sets - to $\BigCorr(\sset^{op})$. We further apply the extension of the Waldhausen construction described in \autoref{S_ext} $S$ object-wise to obtain a map to $\BigCorr(\Spaces)$. 

\begin{Definition}
For $X\in\AugOrdSet$ define the \emph{augmentation} of $X$, $\aug{X}$ to be ${\Hom}_{\AugOrdSet}(X,\{0 \rightarrow 1\})$ considered as an object in $\OrdSet^{op}$.
\end{Definition}

This sends the totally ordered set with $n$ elements in $\AugOrdSet$ (which we denote $\ord{n}$) to the totally ordered set with $n+1$ elements in $\OrdSet^{op}$ (which we denote $\ordop{n}$).

\subsubsection{Construction for objects}
\label{HcombObjects}
Let $X\in\AugOrdSet$. Each element $x\in X$ determines an embedding of $\aug{\{x\}}$ in $\aug{X}$ as follows: Given a map $\{x\} \to \{0\rightarrow 1\}$, we extend it to a map $X \to \{0 \rightarrow 1\}$ by setting it to $0$ on lower than $x$ elements and $1$ on higher than $x$ elements of $X$. Consider the sub-simplicial set of $\aug{X}$ (considered as an object in $\sset^{op}$) generated by these embeddings. We will denote this simplicial set by $H_{comb}(X)$.

\begin{Example}
The first few values of $H_{comb}$ on the objects of $\AugOrdSet$ are as follows
\begin{itemize}
\item $H_{comb}(\ord{0})=\ordop{0}$
\item $H_{comb}(\ord{1})=\ordop{1}$ 
\item $H_{comb}(\ord{2})$ is the horn 
\includegraphics[scale=0.3]{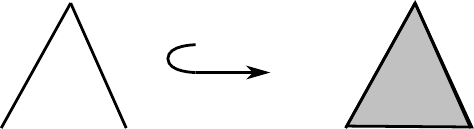}
\item $H_{comb}(\ord{3})$ is \includegraphics[scale=0.3]{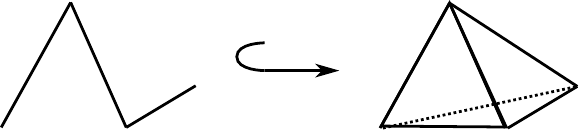}
\end{itemize}
\end{Example}
Applying the Waldhausen construction we get  
\begin{itemize}
\item $\ordop{0}\xmapsto{S}S_0(C)=\point$
\item $\ordop{1}\xmapsto{S}S_1(\CCC)$ 
\item $\includegraphics[scale=0.3]{Figures/2HornComb.pdf} \xmapsto{S}S_1(\CCC)\times S_1(\CCC)\hookrightarrow S_2(\CCC)$
\item $\includegraphics[scale=0.3]{Figures/3HornComb.pdf}\xmapsto{S}S_1(\CCC)\times S_1(\CCC)\times S_1(\CCC) \hookrightarrow S_3(\CCC)$
\end{itemize}

\subsubsection{Construction for arrows}
\label{HcombArrowConstruction}
Let $X \xrightarrow{f} Y$ be a map in $\AugOrdSet$. We want to associate to it a correspondence in $\sset^{op}$.

Let $y\in Y$ and denote $X_y$ the preimage of $y$ under $f$. Similarly to the above, we get an imbedding $\aug{X_y}$ in $\aug{X}$. Denote the sub-simplicial set generated by these imbeddings for all $y$ by $H_{comb}(f)$. Then we have a natural correspondence \[
H_{comb}(X) \rightarrow{} H_{comb}(f) \leftarrow H_{comb}(Y)
\]

\begin{Example}
The multiplication is the image of the map $\ord{2} \to \ord{1}$, and on the level of $H_{comb}$ this map goes to 
\[
\includegraphics[scale=0.5]{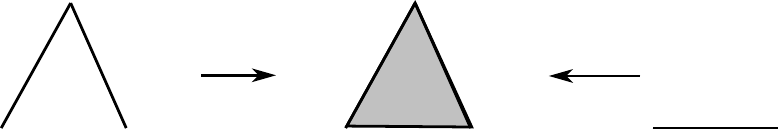}
\]
$S$ then sends the middle object to the short exact sequences, and the maps to restriction to the endpoints or middle respectively. This is the correspondence defining the multiplication in the Hall algebra.
\end{Example}

\subsubsection{Construction for higher morphisms}

Given a 2-cell in $\Nerve{\OrdSet}$
\[
\stik{1}{
X \ar{r}{f} \ar{dr}{\alpha} \& Y \ar{d}{g}\\
{} \& Z
}
\]
Similarly to what we did with arrows, we construct the 2-cell of correspondences

\[
\stik{0.95}{
{} \& {} \& H_{comb}(\alpha) \ar[leftarrow]{dl} \ar[leftarrow]{dr}\& {} \& {} \\
{} \& H_{comb}(f) \ar[leftarrow]{dl} \ar[leftarrow]{dr}\& {} \& H_{comb}(g)\ar[leftarrow]{dl} \ar[leftarrow]{dr}\& {} \\
H_{comb}(X) \& {} \& H_{comb}(Y)\& {} \& H_{comb}(Z)
}
\]
where $H_{comb}(\alpha)$ is the sub simplicial set in $\aug{X}$ generated by the imbeddings of $\aug{\alpha^{-1}(z)}$ for all $z\in Z$.

\begin{Remark}
\label{rem-HcombId}
Note that $H_{comb}(Y)=H_{comb}(\Id_Y)$ so the above can be rewritten as

\begin{equation}
\label{HcombCorrCellEquation}
\cnstik{0.9}{-3.5}{
{} \& {} \& H_{comb}(g\circ\Id_Y\circ f) \ar[leftarrow]{dl} \ar[leftarrow]{dr}\& {} \& {} \\
{} \& H_{comb}(f\circ\Id_X)=H_{comb}(\Id_Y\circ f) \ar[leftarrow]{dl} \ar[leftarrow]{dr}\& {} \& H_{comb}(g\circ\Id_Y)=H_{comb}(\Id_Z\circ g)\ar[leftarrow]{dl} \ar[leftarrow]{dr}\& {} \\
H_{comb}(\Id_X) \& {} \& H_{comb}(\Id_Y)\& {} \& H_{comb}(\Id_Z)
}
\end{equation}

\end{Remark}

From these examples the general definition is now straightforward:

\begin{Definition}
To an $n$-cell in $\Nerve{\AugOrdSet}$, i.e. a sequence of $n$ composable maps $f_1,\ldots,f_n$, we assign the $n$-cell of correspondences which has the apex $H_{comb}(f_n\circ\cdots\circ f_1)$, defined in the same way as above and the general shape of a square in the diagram is
\[
\stik{1}{
H_{comb}(g) \ar{r} \ar{d} \& H_{comb}(g\circ h) \ar{d}\\
H_{comb}(f\circ g) \ar{r} \& H_{comb}(f\circ g\circ h)
}
\]
\end{Definition}

Altogether we obtain a map 
\[
H_{comb}:\Nerve{\AugOrdSet}\rightarrow \BigCorr(\sset^{op})
\]
\subsection{Higher algebra structure for the Hall category and the 2-Segal conditions}
Applying the extended Waldhausen construction  objectwise to the combinatorial construction above we obtain the map
\[
\BigCorr(S)\circ H_{comb}:\Nerve{\AugOrdSet}\rightarrow \BigCorr(\Spaces)
\]

In this section we will show that when the simplicial stack $S$ is 2-Segal this map is in fact a monoidal functor of $(\infty,1)$-categories. In particular it defines canonical algebra structure on $\BigCorr(S)\circ H_{comb}(\ord{1})$ - the stack of objects of $\CCC$. 

Following \cite{gaitsbook} we will consider the $(\infty,1)$-category $\Corr(\Stacks)$ from \autoref{def:corr}. $\Corr(\Spaces)$ is the subspace of  $\BigCorr(\Spaces)$ consisting of the grids with all inner squares being pullbacks.
\begin{Remark}
$\Corr(\Spaces)$ has a structure of a monoidal category given by the Cartesian product in $\Spaces$.
\end{Remark}
It turns out that that the functor $\BigCorr(S)\circ H_{comb}$ lands in $\Corr(\Stacks)$. As a consequence we obtain:
\begin{Theorem}
\label{thmAlgebra}
Let $S$ be a 2-Segal simplicial stack, then $\BigCorr(S)\circ H_{comb}(\ord{1})$ has a canonical structure of algebra in $\Corr(\Stacks)$.
\end{Theorem}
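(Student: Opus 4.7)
The plan is to show that the map $\BigCorr(S)\circ H_{comb}\colon N(\AugOrdSet)\to \BigCorr(\Stacks)$ factors through the subcategory $\Corr(\Stacks)$, and is then monoidal with respect to the ordinal sum monoidal structure on $\AugOrdSet$ and the Cartesian monoidal structure on $\Corr(\Stacks)$. Once this is established, the classical universal property of $(\AugOrdSet,+)$ as the free monoidal category containing a monoid object, together with its $(\infty,1)$-categorical enhancement, will give an algebra object in $\Corr(\Stacks)$, namely the image of $\ord{1}$.

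First I would check the pullback condition. For an $n$-cell in $N(\AugOrdSet)$ given by composable maps $f_1,\ldots,f_n$, the generic square in the associated grid of correspondences has the form
\[
\stik{1}{
H_{comb}(g) \ar{r} \ar{d} \& H_{comb}(g\circ h) \ar{d} \\
H_{comb}(f\circ g) \ar{r} \& H_{comb}(f\circ g\circ h)
}
\]
Unwinding the combinatorial definition in \autoref{HcombArrowConstruction}, each $H_{comb}$ here is a sub-simplicial set of $\aug{X}$ glued from imbeddings $\aug{W_i}\hookrightarrow \aug{X}$ where the $W_i$ partition the relevant domain. Under the Yoneda/object-wise extension of $S$, the value of $S$ on such a glued simplicial set is the limit of $S$ on the pieces. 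Thus the above square becomes precisely the comparison of fiber products associated to two nested polygonal decompositions of the same polygon (one refining the other). The 2-Segal hypothesis on $S$ means that $S$ is adapted to every such polygonal decomposition, hence both nested decompositions compute the same limit, and the square is a pullback.

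Next, I would verify monoidality. On $\AugOrdSet$ the ordinal sum $\ord{m}+\ord{n}$ corresponds under $H_{comb}$ to the simplicial set obtained by joining two polygonal configurations at the single shared boundary vertex, and by construction $H_{comb}(X\sqcup Y)$ is the simplicial set whose pieces come from the embeddings coming separately from $X$ and from $Y$. Applying $S$ sends this to a Cartesian product in $\Stacks$, since there is no shared interior structure to glue along. This produces natural equivalences identifying the composite with the tensor product on $\Corr(\Stacks)$ and verifies them on higher cells via the same polygonal-decomposition argument used above.

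The main obstacle is the pullback verification, and within that the precise combinatorial identification of which polygonal decomposition corresponds to which square of the $n$-cell grid of correspondences. One needs to track how the imbeddings $\aug{X_y}\hookrightarrow \aug{X}$ and $\aug{(g\circ f)^{-1}(z)}\hookrightarrow \aug{X}$ interact as $f$ and $g$ are composed, showing that passing from $H_{comb}(g\circ f)$ to $H_{comb}(f)$ corresponds to refining a coarser polygonal decomposition by a finer one. Once this identification is made cleanly, the 2-Segal condition is invoked exactly in the form of \autoref{2SegalAdaptednessDef} and the pullbacks follow, after which the construction of the algebra object in $\Corr(\Stacks)$ from a monoidal functor out of $N(\AugOrdSet)$ is formal.
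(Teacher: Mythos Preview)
Your proposal is correct and follows essentially the same approach as the paper: show that the 2-Segal condition forces each grid square to go to a pullback (via identification with a polygonal/nested-polygonal decomposition), check monoidality by seeing that ordinal sum goes to gluing of simplicial sets and hence to Cartesian products after applying $S$, and then read off the algebra structure on the image of $\ord{1}$.

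The one place where the paper is sharper is precisely the ``main obstacle'' you flag. The paper resolves it by a reduction step: every cell in $N(\AugOrdSet)$ decomposes as a disjoint union of cells ending in $\ord{1}$ (and one may further assume the maps are surjective by splitting off $\ord{0}$-cells). Under $H_{comb}$ this disjoint union becomes a gluing of simplicial sets at the distinguished entry/exit points of $\aug{X}$; since $S$ is a right Kan extension and $S_0=\pt$, this gluing goes to an honest Cartesian product of correspondences. After this reduction, the grid square for a chain $A\xrightarrow{h}B\xrightarrow{g}C\xrightarrow{f}\ord{1}$ is literally the 2-Segal comparison map for the nested polygons determined by $f,g,h$, so no further bookkeeping is needed. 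Your monoidality sketch (``joining at a shared boundary vertex'') is the same mechanism, but you should make explicit that the shared vertex is $\ordop{0}$ and that $S(\ordop{0})=\pt$ is what turns the pushout into a product.
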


\begin{proof}
First let us show that 2-Segal conditions imply that $\BigCorr(S)\circ H_{comb}$ lands in $\Corr(\Stacks)$.
Note that any cell in $\AugOrdSet$ can be decomposed into a disjoint union of cells ending in $\ord{1}$. Recall that for an ordered set $X$, $H_{comb}(X)$ is a sub-simplicial set of $\aug{X}$ - the simplicial set represented by the ordered set of maps from $X$ to the interval $\{0<1\}$. As a result it has two distinguished points given by the constant maps, which we will call entry and exit points. By construction $H_{comb}$ takes disjoint unions of cells in $\AugOrdSet$ to (correspondences) of simplicial sets glued at corresponding exit/entry points. The gluing in $\sset$ corresponds to taking a product in $\sset^{op}$, and hence $\BigCorr(S)$ takes the above to products of correspondences. (This is a consequence of the fact that the extended Waldhausen construction $S$ is a Kan extension along the Yoneda embedding $\OrdSet^{op} \rightarrow \sset^{op}$, so sends products over a point to products over $S(\point)=\pt$).

Altogether this shows that we can reduce to proving our statement for the cells ending in $\ord{1}$. Same argument also shows that we may assume all maps are onto by splitting off cells starting with $\ord{0}$.

Let us first consider the case of a 2-cell
\[
\stik{1}{
\ord{m}\ar{r}{f} \ar{dr} \& \ord{n}\ar{d}\\
{} \& \ord{1}
}
\]
Its image under $H_{comb}$ includes the square
\[
\stik{1}{
H_{comb}(\ord{n})\ar{r}\ar{d} \& H_{comb}(f) \ar{d}\\
\aug{n}=H_{comb}(\ord{n}\to\ord{1}) \ar{r} \&  \aug{m}=H_{comb}(\ord{m}\to\ord{1})
}
\]

What we want is that $S$ sends this square to a pullback square (note that $S$ is contravariant, i.e. the resulting square of stacks has its arrows inverted). This exactly corresponds to the 2-Segal condition coming from the polygonal decomposition described by inscribing an $n+1$-gon into an $m+1$-gon according to the map $f$ (i.e. placing the interval $[i,i+1]$ astride its preimage under $f$) and so is a pullback since $S$ is a 2-Segal stack.

Similarly, for a general cell, consider again the situation where all maps are onto, then each square appearing in the image can be decomposed (monoidaly) into squares coming from a sequence of maps 
\[
A \xrightarrow{h} B \xrightarrow{g} C \xrightarrow{f} \ord{1}
\]
i.e. squares of the form \[
\stik{1}{
H_{comb}(g) \ar{r} \ar{d} \& H_{comb}(g\circ h) \ar{d}\\
H_{comb}(f\circ g) \ar{r} \& H_{comb}(f\circ g\circ h)
}
\]
(see \autoref{rem-HcombId}).
These kind of squares going to a pullback is exactly the condition of $S$ being adapted to the decomposition described by the nested polygons determined by the sequence of maps $f,g,h$.

As explained above $\BigCorr(S)\circ H_{comb}$ takes disjoint union of cells in $\AugOrdSet$ to products in $\Corr(\Spaces)$, i.e it is monoidal. This endows the image of $\ord{1}$ with the structure of algebra.
\end{proof}

It turns out that the converse of \autoref{thmAlgebra} is true as well. Altogether we have the following:
\begin{Theorem}
\label{2Segalequivalent}
The simplicial stack $S$ is 2-Segal iff the functor $\BigCorr(S)\circ H_{comb}$ lands in the $(\infty,1)$ category $\Corr(\Stacks)$.
\end{Theorem}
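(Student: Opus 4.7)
The forward direction of the equivalence is exactly \autoref{thmAlgebra}, so what remains is the converse: assuming $\BigCorr(S)\circ H_{comb}$ factors through $\Corr(\Stacks)$, show that $S$ is 2-Segal. The strategy is to read the proof of \autoref{thmAlgebra} in reverse, exploiting the fact that that argument already exhibits a precise correspondence between inner squares in the image of $H_{comb}$ and 2-Segal adaptedness conditions in the sense of \autoref{2SegalAdaptednessDef}.

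In detail, fix an arbitrary polygonal decomposition $P$ of an $(m+1)$-gon. The plan is to realize $P$ as the decomposition associated to a chain of composable surjections ending at $\ord{1}$,
\[
\ord{m}\xrightarrow{h}\ord{n_1}\xrightarrow{g}\ord{n_2}\xrightarrow{f}\ord{1},
\]
(possibly longer, for more deeply nested decompositions) in which the fibers of the maps encode the chords producing $P$. This is the combinatorial inverse of the construction used in \autoref{thmAlgebra}, where a chain of maps was read off as a nested polygonal decomposition. The inner squares of the associated cell in $\Nerve{\AugOrdSet}$ have the shape
\[
\stik{1}{
H_{comb}(g) \ar{r} \ar{d} \& H_{comb}(g\circ h) \ar{d}\\
H_{comb}(f\circ g) \ar{r} \& H_{comb}(f\circ g\circ h)
}
\]
and the hypothesis that $\BigCorr(S)\circ H_{comb}$ lands in $\Corr(\Stacks)$ forces $S$ to send each such square to a pullback square of stacks.

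The main step, and also the main obstacle, is to verify that when all such pullback squares for a given cell are assembled together they collectively witness the equivalence $\alpha_P\colon S_{\ordop{m}}\xrightarrow{\sim} S_P$. This amounts to identifying the iterated fiber product $S_{P_1}\times_{S_{P_1\cap P_2}}\cdots\times_{S_{P_{k-1}\cap P_k}} S_{P_k}$ with the limit over the simplicial set $H_{comb}(f\circ g\circ h)$ under $S$, and tracing how the inner pullbacks compose along the cell to produce $\alpha_P$. Since the forward direction of \autoref{thmAlgebra} already establishes the matching in one direction (adaptedness implies the square is sent to a pullback), this unwinding is essentially tautological; the remaining labor is combinatorial bookkeeping, namely checking that every polygonal decomposition of every $(m+1)$-gon is realized by some chain of surjections and that the \emph{composite} pullback extracted from the associated cell coincides, as a diagram of stacks, with the canonical presentation of $S_P$ as an iterated fiber product along the vertices of $P$.
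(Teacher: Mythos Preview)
Your proposal is correct and follows essentially the same route as the paper: the paper's proof of the converse is a single sentence observing that any polygonal decomposition can be realized as a sequence of polygonal nestings, hence as a chain of maps in $\AugOrdSet$, so that the pullback hypothesis on the resulting cell forces adaptedness to $P$. Your added caution about ``assembling'' the inner pullback squares into $\alpha_P$ is more bookkeeping than the paper supplies, but it is the same argument spelled out, not a different one.
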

An analog of this theorem for the associativity data presented by cubes was proven in \cite{ourCubes}.
\begin{proof}
\leavevmode\newline
We have just shown that if $S$ is 2-Segal $\BigCorr(S)\circ H_{comb}$ lands in $\Corr(\Stacks)$.
For the other direction, note that any polygonal decomposition can be described by a sequence of polygonal nestings, and by the above $S$ must be compatible with any such decomposition.
\end{proof}

\begin{Notation}
We will call the functor  $\BigCorr(S)\circ H_{comb}$ "the geometric Hall algebra" and denote it by $H_{geo}$.
\end{Notation}
\section{Transfer of Structure}

\label{Transfer}
The construction in \autoref{Hcomb} produces a functor $\geoHall:\AugOrdSet\to\CorrStacks$ which gives an algebra object in $\Corr(\Stacks)$. Our goal is to transfer this structure to an algebraic setting $\AAA$. In the most general setting what we need is a monoidal functor $T:\Corr(\Stacks)\to\AAA$.

However in many cases the functor $\geoHall$ actually lands in some smaller subcategory $\Corr(\Stacks)_{\SSS,\PPP}$ of $\Corr(\Stacks)$ where $\SSS, \PPP$ are classes of morphisms in $\CCC$ closed on composition. We note the following important facts:

 \begin{Proposition}
\label{HereditarySm-PrProposition}
\begin{enumerate}
    \item For any exact category $\CCC$, the functor $\geoHall$ lands in the subcategory $\Corr(\Stacks)_{all,proper}$ whose objects are Artin stacks locally of finite type.
    \item Suppose $\CCC=\Rep(\Quiver)$ is the category of representations of a quiver then $\geoHall$ lands in $\Corr(\Stacks)_{smooth,proper}$.
\end{enumerate}
\end{Proposition}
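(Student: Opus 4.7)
The plan is to track which face maps of the Waldhausen simplicial stack $S_\bullet$ appear as the horizontal ($\SSS$) and vertical ($\PPP$) edges in the grids in the image of $\geoHall$, and then verify properness and smoothness at the level of those basic face maps. Since both $\SSS$ and $\PPP$ are closed under composition and pullback, and the grids in $\Corr(\Stacks)$ consist of pullback squares (\autoref{2Segalequivalent}), this reduces the problem to a handful of atomic face maps. Unwinding $H_{comb}$ for a surjection $f:\ord{n}\twoheadrightarrow \ord{1}$ and applying $S$ contravariantly, the resulting span has horizontal leg $S_n \to S_1^n$ extracting the $n$ successive subquotients of a flag and vertical leg the composition of inner face maps $S_n \to S_1$ sending the flag to its total object. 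The same picture governs all squares in a general grid by \autoref{rem-HcombId}, so we are reduced to checking the inner face maps $d_i:S_n \to S_{n-1}$ for $0<i<n$ and the subquotient maps $S_n \to S_1^n$.

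For Part (1), that each $S_n$ is an Artin stack locally of finite type is exactly the content of Joyce's result cited in \autoref{WaldhausenStacksSection}. To see that the vertical legs are proper it suffices to show each inner face map $d_i:S_n\to S_{n-1}$ is proper. The prototypical case is $d_1:S_2\to S_1$, sending a short exact sequence to its middle term; its fiber over a representation $V$ is the stack of subobjects $U\hookrightarrow V$, which, fixing dimension vectors, is a quiver Grassmannian -- a closed subscheme of a classical Grassmannian and hence projective. This generalises to arbitrary exact categories satisfying Joyce's hypotheses, since the relevant functor of families is representable by a proper scheme. General inner face maps are iterated Grassmannian-type fibrations of the same kind, and compositions remain proper.

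For Part (2), the heart is to show that $end=(d_2,d_0):S_2\to S_1\times S_1$ is smooth when $\CCC=\Rep(\Quiver)$. Fixing representations $U$ and $W$, the fiber classifies short exact sequences with prescribed outer terms; up to the natural action of automorphisms of the middle term fixing $U$ and $W$, the iso classes form a torsor over $\Ext^1(W,U)$. Since $\Rep(\Quiver)$ is hereditary, $\Ext^i(-,-)$ vanishes for $i\geq 2$, so $\dim\Ext^1(W,U)$ is locally constant on connected components of $S_1\times S_1$ (being determined by the Euler form on dimension vectors), and the fiber is a vector-bundle stack; hence $end$ is smooth. Smoothness of the higher subquotient maps $S_n\to S_1^n$ then follows by induction, building the flag one extension at a time and invoking hereditariness at each step. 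The main obstacle I anticipate is combinatorial rather than geometric: verifying that the $\SSS/\PPP$ labelling of the legs in the more complicated grids produced by $H_{comb}$ on longer chains of composable morphisms matches the $\gridGR{n}$ orientation of \autoref{CorrespondencesSection}. I would treat this by induction on chain length, using \autoref{rem-HcombId} to decompose a general grid square into the canonical one for a triple composition $A\to B\to C\to\ord{1}$ and checking the orientation there directly.
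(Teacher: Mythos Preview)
Your proposal is correct and follows essentially the same strategy as the paper: reduce via the pullback squares coming from 2-Segalness and closure of $\SSS,\PPP$ under composition to the atomic span $S_1\times S_1\xleftarrow{end} S_2\xrightarrow{mid} S_1$, then verify that $mid$ is proper (fiber a projective variety of subobjects) and, in the quiver case, $end$ is smooth. The only notable difference is that for smoothness the paper simply cites the explicit equivariant presentation from \cite{SchiffmannHallCategory}, whereas you argue via $\Ext^1$ and hereditariness; your argument is the conceptual one the paper records immediately afterward in the remark about the relative tangent complex, so the two are complementary rather than in tension. The combinatorial ``obstacle'' you anticipate about matching the $\SSS/\PPP$ orientation is handled in the paper exactly as you suggest, by reading off from \autoref{rem-HcombId} that horizontals are of the form $H_{comb}(f)\to H_{comb}(g\circ f)$ and verticals of the form $H_{comb}(f)\to H_{comb}(f\circ g)$.
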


\begin{proof}
By \autoref{WaldhausenStacksSection} composing Waldhausen construction with $H_{comb}$ gives Artin stacks locally of finite type.

Considering the diagram \autoref{HcombCorrCellEquation} we see that all left facing (horizontal in $\Corr$ terminology) maps are of the form \[
H_{comb}(f)\to H_{comb}(g\circ f)
\]
and all right facing (vertical) maps are of the form\[
H_{comb}(f)\to H_{comb}(f\circ g)
\]
Therefore we need to prove:
\begin{enumerate}
    \item The map of stacks induced by a simplicial map of the form $H_{comb}(f)\to H_{comb}(f\circ g)$ is always proper.
    \item The map of stacks induced by a simplicial map of the form $H_{comb}(f)\to H_{comb}(g\circ f)$ is always smooth.
\end{enumerate}

Recall from \autoref{thmAlgebra} that squares of the form \[
\stik{1}{
H_{comb}(g) \ar{r} \ar{d} \& H_{comb}(g\circ h) \ar{d}\\
H_{comb}(f\circ g) \ar{r} \& H_{comb}(f\circ g\circ h)
}
\]
go to pullbacks of stacks. 
i.e., recalling that the Waldhausen construction is contravariant, we have that the square of stacks
\[
\stik{1}{
\geoHall(f\circ g\circ h) \ar{r} \ar{d} \& \geoHall(g\circ h) \ar{d}\\
\geoHall(f\circ g) \ar{r} \& \geoHall(g)
}
\]
is a pullback.
Since pullbacks of proper and smooth maps are proper and smooth maps respectively, we have that if $\geoHall(g\circ h) \to \geoHall(g)$ is proper then $\geoHall(f\circ g\circ h)\to \geoHall(f\circ g)$ is proper.  Similarly $\geoHall(f\circ g) \to \geoHall(g)$ is smooth implies $\geoHall(f\circ g\circ h) \to \geoHall(g\circ h)$ is smooth. 
This means we can reduce to proving

\begin{Proposition}

\begin{enumerate}
    \item The map of stacks induced by a simplicial map of the form \[H_{comb}(\Id)\to H_{comb}(\Id\circ g)\] is always proper.
    \item When $\CCC=\Rep(\Quiver)$ the map of stacks induced by a simplicial map of the form \[H_{comb}(\Id)\to H_{comb}(g\circ \Id)\] is always smooth.
\end{enumerate}
\end{Proposition}

In both cases, we can reduce to the situation where $g:X\to\point$ because the general situation is a product of those. Moreover, any map $g:X\to\point$ can be decomposed into a sequence of maps, each of which is a disjoint union of the maps $\ord{2}\to\ord{1}$ and $\Id_{\ord{1}}$. Since both proper and smooth are closed on composition we are reduced to the case $g:\ord{2}\to\ord{1}$.

For the proper case the resulting map of stacks is the map from the stack of $1$-flags in $\CCC$ to the stack of objects in $\CCC$ defined by $V\hookrightarrow W \mapsto W$. This map is obviously faithful and the fiber is the variety of subobjects of $W$ which is a projective variety.

For the smooth case the resulting map of stacks is the map from the stack of $1$-flags in $\CCC=\Rep(\Quiver)$ to the $2$-fold product of the stack of objects in $\CCC$ defined by \[
V\hookrightarrow W \mapsto (V,W/V)
\]
This map (of global quotients) can be presented by an equivariant smooth map of varieties (see e.g. \cite[\S1.3]{SchiffmannHallCategory}).

\end{proof}

\begin{Remark}
For the case of a general stable $\infty$-category (replacing the stack $S_2\CCC$ with a version coming from the derived category of $\CCC$ as in \cite{ToenDerivedHallAlgebra}) a computation shows that the relative tangent complex has $i^\text{th}$ homology isomorphic to $\Ext^{i-1}(B,A)$. We see then that $\CCC$ having cohomological dimension 1 is equivalent to this tangent complex being supported in non-positive degrees. This is the version of smoothness condition one can further explore in this setting. 
\end{Remark}

\begin{Remark}
As outlined in \autoref{Int:green twist}, the fact that for a hereditary category the functor $\geoHall$ lands in $\Corr(\Stacks)_{smooth,proper}$ provides a more conceptual explanation of the difference between the hereditary and non-hereditary case with respect to the bi-algebra structure on the Hall algebra. We study this in more detail in \cite{ourGeometricHall2}.
\end{Remark}

\begin{Proposition}
\label{propTransfer}
Suppose $T:\Stacks\to\AAA^{op}$ is monoidal and for any map $p\in\PPP$ we have that $T(p)$ has a right adjoint. Suppose also that for any Cartesian square
\[
\stik{1}{
X\ar{r}{f} \ar{d}{h}\& Y \ar{d}{g}\\
Z\ar{r}{k}\& W
}
\]
where $f,k\in\SSS$ and $g,h\in\PPP$ the image under $T$ has an invertible right mate. Then $T$ induces a monoidal functor $\Corr(\Stacks)_{\SSS,\PPP}\rightarrow\AAA$. 
\end{Proposition}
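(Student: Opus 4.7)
The plan is to invoke the extension theorem of Gaitsgory--Rozenblyum developed in \cite[Chapter V.1]{gaitsbook}, which asserts that a functor $T:\Stacks\to\AAA^{op}$ extends (essentially uniquely) to a functor $\Corr(\Stacks)_{\SSS,\PPP}\to\AAA$ provided that the images of $\PPP$-maps admit right adjoints in $\AAA$ and that every $(\SSS,\PPP)$ base-change square has invertible right mate. Both non-trivial hypotheses are exactly those built into the proposition, so the core of the argument is to show that these ingredients assemble into an input for the extension theorem.

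To describe the extension on low-dimensional cells: on objects set $\overline{T}(X)=T(X)$; on a correspondence $X\xleftarrow{s}Z\xrightarrow{p}Y$ with $s\in\SSS, p\in\PPP$, set $\overline{T}(s,p)=T(p)^R\circ T(s)$, where $T(p)^R$ denotes the right adjoint of $T(p)$ in $\AAA$. Compatibility of this assignment with composition of correspondences $X\leftarrow Z\to Y\leftarrow W\to V$, formed via the pullback $Z\times_Y W$, reduces to the statement that the right mate of the induced Beck--Chevalley square is an isomorphism --- which is precisely the hypothesis on $T$. One has to also observe that the relevant pullback squares actually belong to the allowed class, but this is automatic since $\SSS$ and $\PPP$ are closed under pullback in the setting of interest.

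The main obstacle, and the reason for appealing to \cite{gaitsbook} rather than writing out the extension by hand, is to verify that this low-dimensional prescription lifts coherently to all higher simplices of the complete Segal space $\Corr(\Stacks)_{\SSS,\PPP}$; the universal property of correspondence categories established in loc.\ cit.\ packages exactly this coherence. For the monoidality of the induced functor, one observes that the monoidal structure on $\Corr(\Stacks)_{\SSS,\PPP}$ is induced from the Cartesian product on $\Stacks$ (pullback squares, $\SSS$-maps and $\PPP$-maps are all stable under products), that the monoidality of $T$ supplies the relevant compatibilities between right adjoints and products, and that Beck--Chevalley isomorphisms combine tensorially. Applying the extension theorem to the monoidal enhancement of $T$ --- equivalently, running the above argument levelwise on the underlying $\infty$-operads --- then yields the desired monoidal functor.
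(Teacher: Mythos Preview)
Your proposal is correct and follows the same underlying idea as the paper: send $\SSS$-maps to $T(s)$, $\PPP$-maps to the right adjoints $T(p)^R$, and fill squares by mates. The paper's proof is a two-line sketch that simply describes this prescription on a general grid cell of $\Corr(\Stacks)_{\SSS,\PPP}$ and declares the result to be a simplicial cell in $\AAA$; it does not explicitly address higher coherences or invoke the Gaitsgory--Rozenblyum extension theorem. Your version is more careful on exactly this point, delegating the coherence and monoidality to the universal property from \cite{gaitsbook}, which is a reasonable and arguably more honest way to justify the same construction.
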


\begin{proof}
Extending to $\Corr$ means we need to evaluate $T$ on grids. We do this by simply choosing right adjoints for all verticals. i.e. consider a grid
\[
\stik{1}{
\bullet  \& \bullet \ar{l}{s} \ar{d}{p} \& \bullet \ar{d}{p} \ar{l}{s} \ar[no head,dotted]{r}\& \bullet \ar{d}{p} \& \ar{l}{s}\bullet \ar{d}{p}\\
{}  \& \bullet \& \bullet  \ar[no head,dotted]{r} \ar[no head,dotted]{dr}\ar{l}{s} \& \bullet \ar[no head,dotted]{d}  \& \bullet \ar{l}{s} \ar[no head,dotted]{d}\\
{} \& {} \& {} \& \bullet \& \bullet \ar{l}{s} \ar{d}{p}\\
{} \& {} \& {} \& {} \& \bullet
}
\]
By sending the $s$'s to $T(s)$'s and the $p$'s to the right adjoints of $T(p)$'s and forming the mates of the squares, we get a simplicial cell in $\AAA$.
\end{proof}

\begin{Remark}
This is an extension of the notion \emph{Theory with Transfer} introduced in \cite[Definition 8.12]{KapranovDyckerhoff} for monoidal $\infty$-categories.
\end{Remark}

We finish with the following straightforward corollary to \autoref{thmAlgebra}:

\begin{Corollary}
\label{corrAlgebra}
Suppose the functor $\geoHall$ lands in the subcategory $\Corr(\Stacks)_{\SSS,\PPP}$ of $\Corr(\Stacks)$.  Composing $\geoHall$ with a monoidal functor 
\[T:\Corr(\Stacks)_{\SSS,\PPP}\rightarrow \AAA\]
endows $T\circ\geoHall(\ord{1})$ with the structure of an algebra object in $\AAA$.
\end{Corollary}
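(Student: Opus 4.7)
The plan is to reduce the statement to the fact that a composition of monoidal functors is again monoidal, so that algebra objects are preserved. By Theorem \ref{thmAlgebra} the functor $\geoHall=\BigCorr(S)\circ H_{comb}\colon N(\AugOrdSet)\to\Corr(\Stacks)$ is monoidal (with the Cartesian monoidal structure on $\Corr(\Stacks)$), and consequently $\geoHall(\ord{1})$ acquires its algebra structure from the standard fact that a monoidal functor out of $N(\AugOrdSet)$ picks out an algebra object at $\ord{1}$.

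First I would observe that, by the hypothesis of the corollary, $\geoHall$ factors through the inclusion $\Corr(\Stacks)_{\SSS,\PPP}\hookrightarrow\Corr(\Stacks)$. Since this inclusion is monoidal (the Cartesian product of maps in $\SSS$, resp.\ $\PPP$, lies in $\SSS$, resp.\ $\PPP$, as both classes are closed under composition and hence under the product structure inherited from $\Corr(\Stacks)$), the corestricted $\geoHall\colon N(\AugOrdSet)\to\Corr(\Stacks)_{\SSS,\PPP}$ is monoidal as well, and $\geoHall(\ord{1})$ is an algebra object of $\Corr(\Stacks)_{\SSS,\PPP}$.

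Next I would post-compose with the given monoidal functor $T\colon\Corr(\Stacks)_{\SSS,\PPP}\to\AAA$. The composite
\[
T\circ\geoHall\colon N(\AugOrdSet)\longrightarrow\AAA
\]
is monoidal as a composition of monoidal functors, and the image of $\ord{1}$ is exactly $T\circ\geoHall(\ord{1})$. Applying once more the fact that evaluation at $\ord{1}$ of any monoidal functor out of $N(\AugOrdSet)$ produces an algebra object in the target, we conclude that $T\circ\geoHall(\ord{1})$ is an algebra object of $\AAA$, as claimed.

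The argument is essentially formal; the only point that requires care is verifying that the corestriction to $\Corr(\Stacks)_{\SSS,\PPP}$ is itself monoidal, which amounts to checking that $\SSS$ and $\PPP$ being closed under composition suffices for the Cartesian product of grids in $\Corr(\Stacks)_{\SSS,\PPP}$ to remain in that subcategory. This is immediate from the definition of the monoidal structure, so no genuine obstacle arises, which is why the statement is labelled a corollary.
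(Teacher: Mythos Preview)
Your proposal is correct and matches the paper's intent: the paper gives no explicit proof, labelling the statement a ``straightforward corollary to \autoref{thmAlgebra}'', and your argument---factor $\geoHall$ through the monoidal subcategory $\Corr(\Stacks)_{\SSS,\PPP}$, then compose with the monoidal $T$, and use that monoidal functors out of $N(\AugOrdSet)$ pick out algebra objects at $\ord{1}$---is exactly the intended unpacking.

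One small remark: your justification that $\Corr(\Stacks)_{\SSS,\PPP}$ is a monoidal subcategory (``both classes are closed under composition and hence under the product structure'') is not quite right as stated, since closure under composition does not by itself imply closure under Cartesian products of maps. In practice the classes one uses (e.g.\ smooth, proper, all) are stable under products for independent reasons, and the paper tacitly assumes this when treating $\Corr(\Stacks)_{\SSS,\PPP}$ as monoidal; but the implication you wrote is not a valid general step.
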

\begin{Definition}
\label{def:AlgebraObject}
For a theory with transfer $T$ landing in $\AAA$ we define $\Alg{T}$ to be the $\AAA$-algebra object defined by $T\circ\geoHall$.
\end{Definition}

\subsection{Transfer to stable \texorpdfstring{$\infty$}{infinity}-categories}
\label{CatTransfer}

The following example of theory with transfer is of central interest to us in this article.

Consider the monoidal functor $\Dmod^*:\Stacks \rightarrow \Cat^{op}$ associating to a stack $X$ the ($\infty$,1)-category of bounded coherent complexes of D-modules. This functor is an extension of the functor of D-modules for smooth varieties. For the construction of the functor and discussion of its properties see \cite{crystals,gaitsbook}. In particular it satisfies the requirements of \autoref{propTransfer} for $\SSS,\PPP=all,proper$. We denote the corresponding extension to $\Corr(\Stacks)_{all,proper}$ by $\CatTransfer$. 

\begin{Remark}
\label{remDmodQuiver}
In the case of a quotient stack $X//G$ the functor $\Dmod^*$ assigns the stable $(\infty)$-category whose homotopy category is the $G$-equivariant category of coherent D-modules from \cite{BBGDmod}. Note that the moduli stack of objects of $\Rep_{\CC}\Quiver$ is a disjoint union of stacks of this form.
\end{Remark}

\begin{Theorem}
\label{thmCatTransferAlg}
$\CatTransfer\circ \geoHall$ produces an algebra object in $\Cat$.
\end{Theorem}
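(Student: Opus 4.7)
The plan is to simply assemble the pieces already developed in the paper. By \autoref{HereditarySm-PrProposition}(1), the geometric Hall algebra functor $\geoHall$ actually lands in the subcategory $\Corr(\Stacks)_{all,proper}$, regardless of whether $\CCC$ is hereditary. So it suffices to verify that $\CatTransfer$ is a well-defined monoidal functor out of $\Corr(\Stacks)_{all,proper}$, and then invoke \autoref{corrAlgebra}.

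To establish $\CatTransfer$, I would apply \autoref{propTransfer} to $\Dmod^*$ with $\SSS = all$ and $\PPP = proper$. The three hypotheses to check are: (i) monoidality of $\Dmod^*:\Stacks\to\Cat^{op}$, which follows from the external tensor product sending $(X,Y)$ to $\Dmod(X\times Y)\simeq \Dmod(X)\otimes\Dmod(Y)$; (ii) existence of right adjoints to $\Dmod^*(p)$ for proper $p$, namely the pushforward $p_*$, which exists in the D-module formalism developed in \cite{crystals,gaitsbook}; and (iii) invertibility of the right mate in any Cartesian square whose vertical legs are proper, which is precisely proper base change for D-modules, again available from \cite{crystals,gaitsbook}. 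Together these produce the monoidal extension $\CatTransfer:\Corr(\Stacks)_{all,proper}\to\Cat$.

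Having both $\geoHall$ landing in $\Corr(\Stacks)_{all,proper}$ and $\CatTransfer$ defined on this subcategory, the composition $\CatTransfer\circ\geoHall$ is a monoidal functor $N(\AugOrdSet)\to\Cat$. Applying \autoref{corrAlgebra} with $\SSS,\PPP = all, proper$ and $\AAA = \Cat$ endows the image of $\ord{1}$, i.e.\ $\Dmod^*(S_1)$, with the structure of an algebra object in $\Cat$, completing the proof.

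The main obstacle is not really in the logical structure but in the invocation of proper base change and of the monoidal extension $\Corr(\Stacks)_{all,proper}\to\Cat$ in the stable $\infty$-categorical setting; both are genuinely nontrivial results but have been developed in detail in \cite{gaitsbook}, so the proof reduces essentially to citing those results and verifying their hypotheses apply to our $\Dmod^*$.
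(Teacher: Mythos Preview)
Your proposal is correct and follows essentially the same route as the paper: invoke \autoref{HereditarySm-PrProposition} (together with \autoref{thmAlgebra}) to place $\geoHall$ in $\Corr(\Stacks)_{all,proper}$, then compose with the monoidal functor $\CatTransfer$ obtained via \autoref{propTransfer}. The only difference is cosmetic: the paper records the verification of the hypotheses of \autoref{propTransfer} for $\Dmod^*$ in the paragraph preceding the theorem rather than inside the proof, and cites \autoref{thmAlgebra} directly rather than its corollary \autoref{corrAlgebra}.
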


\begin{proof}
By \autoref{thmAlgebra} and \autoref{HereditarySm-PrProposition}, $\geoHall$ produces an algebra object in $\Corr(\Stacks)_{all,proper}$. Composing with the monoidal functor $\CatTransfer$ yields an algebra object in $\Cat$.
\end{proof}

Recall that the multiplication of the algebra given by $\geoHall$ is defined by the correspondence\[
\stik{1}{
{} \& S_2\CCC \ar{dl}[above, xshift=-0.5em]{end} \ar{dr}{mid} \& {} \\
S_1\CCC\times S_1\CCC \& {} \& S_1\CCC
}
\]
and so the multiplication in the algebra defined by $\CatTransfer\circ \geoHall$ is given by $m:=mid_*\circ end^*$.
\begin{Corollary}
\label{corCatTransferMonoidal}
The functor \[m:\Dmod(\Ob_{\CCC})\otimes\Dmod(\Ob_{\CCC})\rightarrow\Dmod(\Ob_{\CCC})\] equips $\Dmod(\Ob_{\CCC})$ with a structure of monoidal $(\infty,1)$-category.
\end{Corollary}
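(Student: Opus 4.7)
The plan is to unpack \autoref{thmCatTransferAlg} at the object $\ord{1}\in\AugOrdSet$. By construction (\autoref{HcombObjects}), $\geoHall(\ord{1}) = S_1\CCC = \Ob_\CCC$, so $\CatTransfer(\geoHall(\ord{1})) = \Dmod^*(\Ob_\CCC) = \Dmod(\Ob_\CCC)$. \autoref{thmCatTransferAlg} already provides this category with the structure of an algebra object in $\Cat$. Under the standard dictionary between algebra objects in $(\Cat,\otimes)$ and monoidal $(\infty,1)$-categories, this is exactly the data of a monoidal structure on $\Dmod(\Ob_\CCC)$.

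It remains to identify the underlying multiplication with $m = mid_*\circ end^*$. For this I would trace the image of the unique map $\ord{2}\to\ord{1}$ in $\AugOrdSet$ through $\CatTransfer\circ\geoHall$. By the recipe of \autoref{HcombArrowConstruction} together with the extension of the Waldhausen construction, $\geoHall$ sends this map to the correspondence
\[
S_1\CCC\times S_1\CCC \xleftarrow{end} S_2\CCC \xrightarrow{mid} S_1\CCC,
\]
whose right leg is proper by \autoref{HereditarySm-PrProposition}. By the construction of $\CatTransfer$ in \autoref{propTransfer} (which selects right adjoints along proper legs and pullbacks along the other), the resulting functor is precisely $mid_*\circ end^*$, and monoidality of $\Dmod^*$ identifies the source $\Dmod(S_1\CCC\times S_1\CCC)$ with $\Dmod(\Ob_\CCC)\otimes\Dmod(\Ob_\CCC)$.

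There is no real obstacle here: the corollary is a formal consequence of the previous theorem. The only point that requires any attention is matching up the two monoidal structures, namely that the Cartesian monoidal structure on $\Corr(\Stacks)$ inherited from $\Stacks$ is sent by $\CatTransfer$ to the Lurie tensor product on $\Cat$; this is exactly the monoidality of $\Dmod^*$ that was already invoked in \autoref{thmCatTransferAlg}.
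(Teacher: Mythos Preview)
Your proposal is correct and follows essentially the same approach as the paper: the paper treats the corollary as an immediate consequence of \autoref{thmCatTransferAlg}, having identified $m=mid_*\circ end^*$ in the sentence preceding the statement. You have simply spelled out in more detail the identification of $\geoHall(\ord{1})$ with $\Ob_\CCC$, the dictionary between algebra objects in $\Cat$ and monoidal $(\infty,1)$-categories, and the role of monoidality of $\Dmod^*$ in matching the tensor products---all of which the paper leaves implicit.
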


\begin{Remark}
\label{remLeftRightAdjoint}
Note that since $mid$ is always proper, this is the same as $mid_!\circ end^*$ which often appears in the literature. Additionally, when $\CCC=\Rep(\Quiver)$ the map $end$ is smooth and so $end^*$ differs from $end^!$ by a shift. As a result this functor (and similarly all the other functors appearing in this construction) has both a left and right adjoint. This point will be important when studying the bialgebra structure of the Hall category in \cite{ourGeometricHall2}.
\end{Remark}

\subsection{Transfer to \texorpdfstring{$\VectNu$}{Vect-nu}}
\label{VectTransfer}

Let us take Grothendieck groups of the $(\infty,1)$-category construction above. The shift functors endow $\Kgroup(\Dmod(X))$ with the structure of an algebra over $\ZZ[\nu,\nu^{-1}]$ by defining
\[\nu[M]=[M[1]], \nu^{-1}[M]=M[-1]\]

Thus we obtain a functor \[
\VectNuTransfer:\Corr(\Stacks)\to\VectNu
\]

\begin{Proposition}
$\VectNuTransfer(\Ob_{\CCC})$ has the structure of associative algebra over $\ZZ[\nu,\nu^{-1}]$.
\end{Proposition}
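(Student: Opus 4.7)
The plan is to reduce the statement to \autoref{corrAlgebra} applied to the theory with transfer $\VectNuTransfer$. By \autoref{thmCatTransferAlg} we already know that $\CatTransfer \circ \geoHall$ endows $\Dmod(\Ob_\CCC)$ with the structure of an algebra object in $\Cat$. The natural move is to exhibit $\VectNuTransfer$ as the composition $K \circ \CatTransfer$, where $K$ is the Grothendieck $K$-group functor viewed as a (lax) monoidal functor from $\Cat$ to $\VectNu$, and then to invoke \autoref{corrAlgebra} (or equivalently \autoref{thmCatTransferAlg}) with $T = \VectNuTransfer$ in place of $\CatTransfer$.

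First I would spell out the factorization: on objects, $\VectNuTransfer(X) = K(\Dmod(X))$, and on morphisms in $\Corr(\Stacks)_{all,proper}$ the transfer is obtained by applying $K$ to the pull-push functor already defined by $\CatTransfer$. Since $\CatTransfer$ is monoidal by \autoref{CatTransfer} and \autoref{propTransfer} (with $\SSS = all$, $\PPP = proper$), it suffices to show that $K$ carries monoidal/algebra structures in $\Cat$ to associative $\ZZ[\nu,\nu^{-1}]$-algebras. This amounts to producing natural maps
\[
K(\mathcal{A}) \otimes_{\ZZ[\nu,\nu^{-1}]} K(\mathcal{B}) \longrightarrow K(\mathcal{A} \otimes \mathcal{B})
\]
sending $[M] \otimes [N]$ to $[M \boxtimes N]$, together with a unit $\ZZ[\nu,\nu^{-1}] \to K(\pt)$, and checking the associativity and unit coherences. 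These are standard for stable $\infty$-categories: the external tensor product is biexact, and the shift compatibility $M[1] \boxtimes N \simeq (M \boxtimes N)[1]$ guarantees that the lax monoidal structure is $\ZZ[\nu,\nu^{-1}]$-linear. Thus $K$ is a lax monoidal functor $\Cat \to \VectNu$.

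Composing with the monoidal functor $\CatTransfer$ of \autoref{CatTransfer}, we obtain that $\VectNuTransfer \colon \Corr(\Stacks)_{all,proper} \to \VectNu$ is lax monoidal. Applying $\VectNuTransfer$ to the algebra object in $\Corr(\Stacks)_{all,proper}$ produced by $\geoHall(\ord{1}) = \Ob_\CCC$ (\autoref{thmAlgebra} combined with \autoref{HereditarySm-PrProposition}) and invoking \autoref{corrAlgebra} yields the desired structure of an associative $\ZZ[\nu,\nu^{-1}]$-algebra on $K(\Dmod(\Ob_\CCC)) = \VectNuTransfer(\Ob_\CCC)$. The multiplication is induced by the correspondence of \autoref{CatTransfer}, i.e.\ $[M] \cdot [N] = [mid_* \, end^*(M \boxtimes N)]$.

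The only genuinely nontrivial step is verifying the lax monoidality of $K$, and in particular that the shift gives rise to a well-defined $\ZZ[\nu,\nu^{-1}]$-module structure which is compatible with the external tensor product. Everything else is formal from the fact that $\CatTransfer \circ \geoHall$ is already an algebra object in $\Cat$ and that applying a (lax) monoidal functor to an algebra object yields an algebra object in the target. No polygonal decomposition or $2$-Segal argument is needed at this stage, since all of that work has been absorbed into \autoref{thmCatTransferAlg}.
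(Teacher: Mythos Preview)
Your proposal is correct and follows essentially the same approach as the paper. The paper's proof is a one-liner: it cites \autoref{thmCatTransferAlg} and notes that $\square^*$ and $\square_*$ commute with shifts, which is exactly the content of your factorization $\VectNuTransfer = K \circ \CatTransfer$ together with the $\ZZ[\nu,\nu^{-1}]$-linearity of the lax monoidal structure on $K$; you have simply unpacked this in more detail.
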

\begin{proof}
This is a direct consequence of \autoref{thmCatTransferAlg} and the fact that the functors $\square^*$ and $\square_*$ commute with  shifts.
\end{proof}

\section{Categorification of quantum groups.}
\label{secKLR}
\subsection{The stable \texorpdfstring{$\infty$}{infinity} Hall category}
\label{secHallCat}
In this section we will fix $\CCC=\Rep_{\CC}\Quiver$ for $\Quiver=(I,H)$ - a quiver with $I$ the set of vertices and $H$ the set of edges. We have source and target maps $s,t:H\rightarrow I$. We will assume $\Quiver$ has no loop edges, i.e. $s(h)\neq t(h)$ for any $h\in H$. 

To a quiver $\Quiver$ we can assign a Cartan datum and so a Kac-Moody algebra $\mathfrak{g}$. We denote by  $U_\nu(\mathfrak{g}_{\Quiver})$ the quantized enveloping algebra, and by $\dot{U}_\nu(\mathfrak{g}_{\Quiver})$ its idempotented integral form introduced by Lusztig. $\dot{U}_\nu(\mathfrak{g}_{\Quiver})$ is a $\ZZ[\nu,\nu^{-1}]$-algebra.

Applying our geometric Hall algebra construction followed by transfer from \autoref{CatTransfer} to $\CCC$ produces a monoidal $(\infty,1)$-category $\Alg{\CatTransfer}$ whose Grothendieck group contains, as a subalgebra, the negative half of $\dot{U}_\nu(\mathfrak{g}_{\Quiver})$.

When $\Quiver$ is of finite type we have $K(\Alg{\CatTransfer})\cong \Zquantum$. In general, one can define a certain subcategory $\HallCat$ of $\Alg{\CatTransfer}$ whose Grothendieck group is isomorphic to $\Zquantum$. We will define $\HallCat$ and explain the relationship to the quantum group in this section. 

Our definition of $\HallCat$ is an alteration of the definition, essentially due to Lusztig, of an additive monoidal category $\QQQ$ (the monoidal structure on $\QQQ$ is constructed for example in \cite{SchiffmannHallCategory}). The product on both $\QQQ$ and $\HallCat$ is given by the correspondence of stacks 
\begin{equation}
\label{CorrMult}
\stik{1}{
{} \& S_2\CCC \ar{dl}[above, xshift=-0.5em]{end} \ar{dr}{mid} \& {} \\
S_1\CCC\times S_1\CCC \& {} \& S_1\CCC
}
\end{equation}
Both categories are generated under this product by certain elementary sheaves on the components of the stack $S_1\CCC=\Ob_{\CCC}$. The difference is that $\QQQ$ is defined to be an additive category and $\HallCat$ is, of course, a stable $\infty$-category. The higher coherences for the monoidal structure on $\HallCat$ are explicitly given by the extended Waldhausen construction of \autoref{WaldhausenStacksSection}.The bigger category $\HallCat$ is a correct environment for constructing the categorification of the bialgebra conditions in \cite{ourGeometricHall2}.

To elaborate on the construction of the above product: 
we compose the functor 
\[H_{geo}:N(\AugOrdSet) \rightarrow \Corr(\Stacks)\] with the functor \[\CatTransfer:\Corr(\Spaces)\rightarrow \Cat\] from \autoref{CatTransfer} to get a monoidal functor 
\[N(\AugOrdSet)\rightarrow \Cat\] 
By \autoref{corrAlgebra} this is an algebra object which we denoted $\Alg{\CatTransfer}$ in the category of $(\infty,1)$-categories. In particular, $\Alg{\CatTransfer}$ is a monoidal stable $(\infty,1)$-category. We will denote the product by $\star$.

Note that by \autoref{remDmodQuiver}, $\CatTransfer\circ\geoHall(\ord{1})=\Dmod_G(\Ob_{\CCC})$. Let us now define a monoidal subcategory $\HallCat$ of $\Dmod_G(\Ob_{\CCC})$. 

Let us briefly recall certain facts about $\CCC:=\Rep_{\CC}\Quiver$. The objects of $\CCC$ are pairs $(V=\bigoplus_{i\in I}V_i,(x_h)_{h\in H})$ of finite dimensional $I$-graded vector spaces and collections of morphisms $x_h:V_{s(h)}\rightarrow V_{t(h)}$. 
To every object of $\CCC$ we can associate its dimension vector  $\alpha \in \NN^{I}$. The stack of objects of $\CCC$ of dimension $\alpha$ can be constructed as a quotient stack $\Ob_{\CCC}^{\alpha}=E//G$, where
\[E=\bigoplus_{h\in H}\Hom(V_{s(h)},V_{t(h)}), G=\Pi_{i\in I}GL(V_{\alpha_{i}})\] 
$g\in G$ acts by $g\cdot(y_{h})_{h\in H}=(g_{t(h)}y_hg^{-1}_{s(h)})_{h\in H}$. The isomorphism classes of objects of $\CCC$ of dimension $\alpha$ correspond to the points of the stack $\Ob_{\CCC}^{\alpha}\cong E//G$. The stack of objects of the category $\CCC$ decomposes as $\Ob_{\CCC}=\bigsqcup_{\alpha \in \NN^I}\Ob_{\CCC}^{\alpha}$.The isomorphism classes of simple objects correspond to the standard basis ${\{\epsilon_i\}}_{i \in I}$ of $\NN^I$. Since our quiver has no loops there is only one such class corresponding to each such dimension vector. 

\begin{Definition}
For an $n$-tuple $\underline{\alpha}=(\alpha_1,\ldots,\alpha_n)$ of dimension vectors, define the \emph{Lusztig sheaf} $\Lusztig_{\underline{\alpha}}$ to be the product $\OOO_{\alpha_1}\star\cdots\star\OOO_{\alpha_n}$.
\end{Definition}

\begin{Definition}
\label{defH}
$\HallCat$ is the smallest stable subcategory of $\Alg{\CatTransfer}$ containing the D-modules $\Lusztig_{\underline{\alpha}}$ where all  $\{\alpha_i\}_{1\leq i\leq n}$ are the dimension vectors corresponding to simple objects of $\CCC$.
\end{Definition}

By taking the Grothedieck group $K(\HallCat)$ we obtain a subalgebra of $\Alg{\VectNuTransfer}$ over $\ZZ[\nu,\nu^{-1}]$ 
where $\nu,\nu^{-1}$ act as 
\[\nu[M]=[M][1],\nu^{-1}[M]=M[-1]\].

\begin{Proposition}
\label{propQuantumGroup}
The algebra $K(\HallCat)$ is isomorphic to Lusztig's integral form of the quantum group $\Zquantum$.
\end{Proposition}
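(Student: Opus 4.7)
The plan is to compare $\HallCat$ with Lusztig's additive category $\QQQ$ (see \cite{SchiffmannHallCategory}), whose Grothendieck group is known to realise Lusztig's integral form $\Zquantum$, and to argue that no information is lost in passing from the additive setting to the stable $\infty$-categorical one.

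First, I would construct a $\ZZ[\nu,\nu^{-1}]$-algebra homomorphism $\phi\colon\Zquantum\to K(\HallCat)$. By \autoref{thmCatTransferAlg} and \autoref{corCatTransferMonoidal}, $K(\HallCat)$ inherits a $\ZZ[\nu,\nu^{-1}]$-algebra structure under the $\star$-product, with $\nu$ acting by homological shift. Send each Lusztig generator $E_i$ (and its divided power $E_i^{(r)}$) to the class of the structure sheaf $\OOO_{r\epsilon_i}$ on the component $\Ob_{\CCC}^{r\epsilon_i}$, which lies in $\HallCat$ by \autoref{defH}. The quantum Serre relations among these classes follow from Lusztig's classical geometric computations: they take place entirely among the generating sheaves and the components $\Ob_{\CCC}^{\alpha}$, and are insensitive to whether the ambient category is $\QQQ$ or $\HallCat$; in particular they are already known to hold in $K(\QQQ)$.

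Next, I would verify surjectivity of $\phi$. By \autoref{defH}, every object of $\HallCat$ is built from the Lusztig sheaves $\Lusztig_{\underline{\alpha}}$ (with all $\alpha_i$ simple) by iterated cofibers and shifts. Each cofiber sequence yields a linear relation in $K(\HallCat)$, so every class is a $\ZZ[\nu,\nu^{-1}]$-linear combination of classes of Lusztig sheaves, all of which lie in the image of $\phi$.

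The main obstacle will be injectivity of $\phi$. For this I would compare $\HallCat$ with $\QQQ$: both sit inside $\Alg{\CatTransfer}$ as subcategories generated by the same sheaves $\OOO_{\alpha_i}$ under the correspondence product \autoref{CorrMult}, so there is a natural map $K(\QQQ)\to K(\HallCat)$ compatible with $\phi$ and with Lusztig's theorem $K(\QQQ)\cong\Zquantum$. The content of injectivity is that passing from the additive to the stable $\infty$-categorical setting introduces no new relations in the Grothendieck group. This should rest on the semisimplicity/purity of the Lusztig sheaves (the decomposition theorem applied to the proper smooth correspondence \autoref{CorrMult}, which is available by \autoref{HereditarySm-PrProposition}): every object of $\HallCat$ decomposes up to shift as a direct sum of the simple perverse summands already appearing in $\QQQ$, so any cofiber sequence in $\HallCat$ splits after taking classes and contributes only the direct-sum relations already present in $K(\QQQ)$. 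Identifying $K(\QQQ)$ with $\Zquantum$ via Lusztig's theorem then yields an inverse to $\phi$ and completes the argument.
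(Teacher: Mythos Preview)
Your overall strategy --- reduce to Lusztig's theorem $K(\QQQ)\cong\Zquantum$ by comparing $K(\HallCat)$ with $K(\QQQ)$ --- is exactly what the paper does. But the paper's execution is both shorter and more honest about the main bridge you are silently using.

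The gap in your argument is the claim that $\QQQ$ and $\HallCat$ ``both sit inside $\Alg{\CatTransfer}$''. They do not: $\HallCat$ is a subcategory of D-modules on $\Ob_{\CCC}$, while $\QQQ$ is by definition a subcategory of the derived category of constructible sheaves $\DShcomplex{\Ob_{\CCC}}$. Your comparison map $K(\QQQ)\to K(\HallCat)$ and your assertion that the simple perverse summands of $\HallCat$ are ``already appearing in $\QQQ$'' both presuppose an identification between these two worlds that you never name. The paper supplies it directly: the de Rham functor (Riemann--Hilbert) gives a natural map $K(\Dmod(\Ob_{\CCC}))\to K(\DShcomplex{\Ob_{\CCC}})$ which is an isomorphism on the regular holonomic part; since the $\OOO_{\alpha}$ and their $\star$-products are regular holonomic, this restricts to an algebra isomorphism $K(\HallCat)\cong K(\QQQ)$. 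Once that is in hand there is no need to build $\phi$ by generators, verify quantum Serre relations, or argue surjectivity and injectivity separately --- Lusztig's theorem finishes immediately.

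Your purity/decomposition-theorem sketch for injectivity is not wrong in spirit, but it is doing by hand the work that Riemann--Hilbert already packages: regular holonomic D-modules correspond to perverse sheaves, and the semisimplicity you invoke on the constructible side is precisely what forces the K-groups to match. So rather than a genuinely different route, your proposal is a longer path to the same destination with the crucial step left implicit.
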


\autoref{propQuantumGroup} follows from the theorem by Lusztig that constructs an isomorphism between $\Zquantum$ and the Grothendieck group of the Lusztig's category $\QQQ$. $\QQQ$ is defined to be an additive subcategory of the derived category of complexes of equivariant constructible sheaves on $\Ob_{\CCC}$. Let us briefly recall the definition of $\QQQ$.

For every stack $\Ob_{\CCC}^{\alpha}=E//G$ from the decomposition  $\Ob_{\CCC}=\bigsqcup_{\alpha \in \NN^I}\Ob_{\CCC}^{\alpha}$ consider the derived category of bounded complexes of $G$-equivariant constructible sheaves $\DShcomplex{(E)}$ (see \cite{BernsteinLunts}). The correspondence of stacks \autoref{CorrMult} defines monoidal structure on $\DShcomplex{(E)}$ with the product given by the functor $m:=mid_!\circ end^*$ (see e.g. \cite{SchiffmannHallCategory} for the proof of this statement). Let us denote this product also by $\star$.

Let $\mathbb{1}_{\alpha}$ denote the perverse extension of the constant sheaf on the component $\Ob_{\alpha}$. 
\begin{Definition}
\label{defQ}
$\QQQ$ is defined to be the smallest triangulated subcategory of $\DShcomplex{(\Ob_{\CCC})}$ closed under $\star$, shifts and taking direct summands and containing all complexes $\mathbb{1}_{\alpha}$ for the dimension vectors $\alpha$ corresponding to the simple objects of $\CCC$. 
\end{Definition}
We have the following:

\begin{unnu-theorem}[Lusztig]
\label{thmLusztigQuantum}
$K(\QQQ)\cong \Zquantum$
\end{unnu-theorem}

\begin{proof}[Proof of \autoref{propQuantumGroup}]
The de-Rham functor induces a natural map 
\[K(\Dmod(\Ob_{\CCC}))\rightarrow K(\DShcomplex{\Ob{\CCC}})\]
whose restriction to the Grothendieck group of the category of regular holonomic D-modules is an isomorphisms.Since $\mathcal{O}_{\Ob_{\CCC}^{\alpha}}$ and their products are regular holonomic we obtain an isomorphism
\[K(\HallCat)\cong K(\QQQ)\]
This isomorphism is natural hence it induces an isomorphism of algebras.
By the theorem above we get 
\[
K(\HallCat)\cong \Zquantum
\]

\end{proof}

\begin{Remark}
\label{remFiniteTypeQuiver}
For the case of $\Quiver$ of finite type the number of $G$ orbits is finite for each $\Ob_{\CCC}^\alpha=E//G$. 
Hence we have $\QQQ=\DShcomplex{\Ob_{\CCC}}$ (see \cite[Theorem 2.8]{SchiffmannHallCategory}). 
Also since in this case all the $G$-equivariant D-modules are regular holonomic (see \cite{KashiwaraRegHol}) the category $\Dmod(\Ob_{\CCC})$ and the category $\DShcomplex{(\Ob_{\CCC})}$ are equivalent. 
Therefore in this case $\HallCat=\Dmod(\Ob_{\CCC})=\Alg{\CatTransfer}$ and $K(\Alg{\CatTransfer})\cong\Zquantum$
\end{Remark}

\subsection{Relation to the KLR categorification}
\label{subKLR}
In this section we would like to explain the connection between the monoidal category $\HallCat$ from \autoref{defH} and the Khovanov-Lauda-Rouquier (KLR) categorification \cite{KL1,KL2,R} of the quantum group $U_{\sqrt{q}}(\mathfrak{g}_{\Quiver})$ associated to the quiver $\Quiver$.

In \cite{KL1,KL2} the authors construct a family of graded algebras $R(\nu)$ indexed by dimension vectors $\nu \in \NN^I$. The categorification of the quantum group $U_{\sqrt{q}}(\mathfrak{g}_{\Quiver})$ is given by the categories $R(\nu)-\Mod^{gr,proj}$ of $\ZZ$- graded projective finitely generated modules over $R(\nu)$. The authors prove that 
\[
\bigoplus_{\nu \in \NN^I}
K(R(\nu)-\Mod^{gr,proj})  \cong \dot{U}_{\sqrt{q}}(\mathfrak{g}_{\Quiver})
\] 
as $\ZZ[q,q^{-1}]$ algebras where the powers of $q$ act by shifts of the grading and the product on the left hand side is given by induction functors between the grades.

Let us explain how this category is related to $\HallCat$.

\begin{Definition}
Define the \emph{Lusztig algebroid} $\LusztigAlgebroid$ to be the $\Ainf$-category with
\begin{itemize}
    \item Objects: $\underline{\alpha}=(\alpha_1,\ldots,\alpha_n)$ with $\alpha_i$ being dimension vectors.
    \item Morphisms: $\Maps(\underline{\alpha},\underline{\beta})=\Maps(\Lusztig_{\underline{\alpha}},\Lusztig_{\underline{\beta}})$.
\end{itemize}
\end{Definition}

\begin{Proposition}
 $\HallCat$ is equivalent to the category $\Perf{\LusztigAlgebroid}$ of perfect modules over $\LusztigAlgebroid$.
\end{Proposition}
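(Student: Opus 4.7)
The plan is to apply the standard representation theorem identifying the stable $\infty$-category generated by a small collection of objects with the category of perfect modules over their endomorphism $\Ainf$-algebroid. In the form I need (a version of a theorem of Keller; compare Lurie, \emph{Higher Algebra} \S 7.1.2, and the treatment of dg-enhancements in \cite{gaitsbook}), this says: if $\DDD$ is a stable $\infty$-category and $\{G_i\}$ a small collection of objects in it, then the Yoneda-type functor
\[ Y\colon \DDD \longrightarrow \Mod(\AinfEnd(\{G_i\})),\qquad M\mapsto \bigl(G_i\mapsto \Maps(G_i,M)\bigr) \]
restricts to an equivalence between the smallest stable subcategory of $\DDD$ containing the $G_i$ and the category $\Perf(\AinfEnd(\{G_i\}))$.

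First I would apply this with $\DDD=\Alg{\CatTransfer}$ and $G_i=\Lusztig_{\underline{\alpha}}$, where $\underline{\alpha}$ ranges over the tuples appearing in \autoref{defH}. The $\infty$-category $\Alg{\CatTransfer}$ of D-modules comes with its standard dg/$\Ainf$-enhancement (as in \cite{gaitsbook}), so the mapping complexes $\Maps(\Lusztig_{\underline{\alpha}},\Lusztig_{\underline{\beta}})$ are exactly the morphisms of $\LusztigAlgebroid$ by definition, and $Y$ lands tautologically in $\LusztigAlgebroid$-modules with composition providing the $\Ainf$-action.

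Second, I would check the two properties needed to apply the theorem. On the one hand, $Y$ sends each generator $\Lusztig_{\underline{\alpha}}$ to the representable (hence perfect) module $h_{\underline{\alpha}}$, so $Y$ restricts to a functor $\HallCat \to \Perf(\LusztigAlgebroid)$. On the other hand, by the very definition of $\LusztigAlgebroid$, $Y$ is fully faithful on the full subcategory spanned by the Lusztig sheaves. A standard thick-subcategory argument then upgrades this: the full subcategory of $\HallCat$ on which $Y$ is fully faithful is stable under finite colimits, shifts and retracts and contains the $\Lusztig_{\underline{\alpha}}$, hence equals $\HallCat$; and essential surjectivity onto $\Perf(\LusztigAlgebroid)$ follows because the representables $h_{\underline{\alpha}}$ generate $\Perf(\LusztigAlgebroid)$ as a thick subcategory.

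The main obstacle I expect is not geometric but one of bookkeeping: one must fix a concrete dg/$\Ainf$-enhancement of $\Alg{\CatTransfer}$ in which the mapping complexes defining $\LusztigAlgebroid$ are actually computed, verify that the Lusztig sheaves are compact in that enhancement (so that the "smallest stable subcategory" coincides with "perfect modules" and not some larger ind-completion), and match the resulting notion of $\Perf(\LusztigAlgebroid)$ with the one implicit in the statement. Once this framework is in place, no further computation is required.
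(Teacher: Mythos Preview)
Your proposal is correct and rests on the same core mechanism as the paper: the Yoneda functor $M\mapsto\Maps(-,M)$ into modules over the endomorphism $\Ainf$-algebroid, followed by a thick-subcategory argument. The organizational difference is that the paper works grade-by-grade, fixing a dimension vector $\nu$ and using the single compact generator $X_\nu:=\bigoplus_{|\underline{\alpha}|=\nu}\Lusztig_{\underline{\alpha}}$, and then invokes two further inputs you do not need: the formality of $\AinfEnd(X_\nu)$ (deduced from purity of the Lusztig sheaves) and the Varagnolo--Vasserot fact that the images of the $\Lusztig_{\underline{\alpha}}$ are graded projective over $\Ext^*(X_\nu,X_\nu)$, in order to conclude that the image of the Yoneda functor is $\Perf$. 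Your route is more direct for the bare statement, since the representables generate $\Perf(\LusztigAlgebroid)$ as a thick subcategory regardless of formality; the paper's detour through formality is not wasted, however, because it is precisely what sets up the subsequent comparison with the KLR algebras $R(\nu)$. The compactness issue you flag is the one substantive point the paper asserts without argument (it simply states that $X_\nu$ is a compact generator); both proofs need it.
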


\begin{proof}
$\HallCat$ is graded by dimension vectors, and for each dimension vector $\nu$ we have that $X_{\nu} := \bigoplus_{|\underline{\alpha}|=\nu} \Lusztig_{\underline{\alpha}}$ is a compact generator. Hence we obtain the functor $\Maps(X_{\nu},-)$ from $\HallCat$ to the category of modules over the $\Ainf$-algebra $\AinfEnd(X_{\nu})$. As we explain below this algebra is formal and the images of Lusztig sheaves $\Lusztig_{\underline{\alpha}}$ are quasi-isomorphic to graded projective modules over its cohomology. By definition of $\HallCat$ the image of $\Maps(X_{\nu},-)$ is the the smallest stable subcategory containing these images, hence it is equal to $\Perf(\AinfEnd(X_{\nu}))$. 

The proposition now follows.
\end{proof}

In this language, the monoidal structure is expressed via tensoring with bimodules. Namely for each pair of dimension vectors $\nu,\nu'$ we have the bimodule $M_{\nu,\nu'}:=\Maps(X_{\nu+\nu'},X_\nu\otimes X_{\nu'})$.

The Lusztig sheaves $\Lusztig_{\underline{\alpha}}$ are pure Hodge sheaves of weight $0$, since they are given by pushforwards of pure Hodge sheaves along proper maps. It follows that algebras $\AinfEnd{X_{\nu}}$ are formal, i.e. quasi-isomorphic to $\Ext^*(X_\nu,X_\nu)$. Moreover the bimodules $M_{\nu,\nu'}$ are formal.

Varagnolo and Vasserot in \cite{VaragnoloVasserot} prove the following  

\begin{unnu-theorem}[Varagnolo-Vasserot, \cite{VaragnoloVasserot}]
Let $R(\nu)$ be the KLR algebra corresponding to the dimension vector $\nu$, then as a graded algebra,
$\Ext^*(X_\nu,X_\nu)$ is Morita equivalent to $R(\nu)$.
\end{unnu-theorem}

\begin{unnu-theorem}[Varagnolo-Vasserot, \cite{VaragnoloVasserot}]
The graded piece of the Lustztig category $\QQQ$ corresponding to the dimension vector  $\nu$ is equivalent to the category of graded projective finite dimensional modules over the graded algebra $\Ext^*(X_{\nu},X_\nu)$. 
\end{unnu-theorem}

In particular, \cite{VaragnoloVasserot} show that  $\Ext^*(X_\nu,\Lusztig_{\underline{\alpha}})$ is a graded projective module over $\Ext^*(X_{\nu},X_\nu)$.

We can therefore express the monoidal structure in terms of tensoring with the bimodules  $\Ext^*(X_{\nu+\nu'},X_\nu\otimes X_{\nu'})$ (quasi-isomorphic to $M_{\nu,\nu'}$) over $R(\nu)$. Note that in \cite{KL1} the monoidal structure is given - in contrast with the above - by induction functors between the $R(\nu)$. i.e. the multiplication functor \[
\HallCat_\nu\otimes\HallCat_{\nu'}\cong R(\nu)\dashmod\otimes R(\nu)\dashmod\to R(\nu+\nu')\dashmod\cong\HallCat_{\nu+\nu'}
\]
is given by induction along an inclusion $R(\nu)\otimes R(\nu')\subset R(\nu+\nu')$. 

\begin{Proposition}
The monoidal structure on $\HallCat$ given by induction and the one given by tensoring with the $M_{\nu,\nu'}$ coincide.
\end{Proposition}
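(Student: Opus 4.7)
The plan is to realize both monoidal structures as tensoring with bimodules over the KLR algebras, and then identify those bimodules via the Varagnolo--Vasserot Morita equivalence. Since $\HallCat_\nu \simeq R(\nu)\dashmod$, and both product bifunctors $\HallCat_\nu\otimes\HallCat_{\nu'}\to\HallCat_{\nu+\nu'}$ are exact and colimit-preserving, each is represented by an $R(\nu+\nu')$-$(R(\nu)\otimes R(\nu'))$-bimodule, obtained by applying $\Maps(X_{\nu+\nu'},-)$ to the image of $X_\nu\otimes X_{\nu'}$. Tautologically, the Hall product $\star$ is represented by
\[
M_{\nu,\nu'}=\Maps(X_{\nu+\nu'},X_\nu\star X_{\nu'}),
\]
while the KLR induction product is represented by $R(\nu+\nu')$ viewed as a bimodule via the inclusion $R(\nu)\otimes R(\nu')\hookrightarrow R(\nu+\nu')$.

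Once both products are in this form, I would identify the two bimodules. The family $\{M_{\nu,\nu'}\}$ encodes the convolution multiplication on the bigraded algebra $\bigoplus_\nu\Ext^*(X_\nu,X_\nu)$; Varagnolo--Vasserot's theorem sends this bigraded algebra onto $\bigoplus_\nu R(\nu)$ equipped with its natural multiplication. Chasing the bimodule that represents multiplication in the $(\nu,\nu')$-component of the right-hand side then yields an isomorphism $M_{\nu,\nu'}\cong R(\nu+\nu')$ of $R(\nu+\nu')$-$(R(\nu)\otimes R(\nu'))$-bimodules. Formality of the $\Ainf$-endomorphism algebras and of the bimodules $M_{\nu,\nu'}$, which holds because the Lusztig sheaves are pure Hodge of weight zero (being proper pushforwards of constant sheaves from smooth varieties), lifts this identification from graded bimodules to the stable $\infty$-categorical level, giving agreement of the two products as exact functors.

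The main obstacle is not producing the bimodule isomorphism itself but verifying that the Varagnolo--Vasserot equivalence is \emph{monoidal}, i.e. that the convolution product defined geometrically via the correspondence defining $\star$ matches, after applying $\Ext^*(X_\bullet,-)$ and passing through Morita equivalence, the induction multiplication on $\bigoplus R(\nu)$. This compatibility is essentially implicit in Varagnolo--Vasserot, but in our conventions it requires an explicit comparison of the convolution algebra structure on $\bigoplus \Ext^*(X_\nu,X_\nu)$ with the geometric description of $R(\nu+\nu')$ in terms of polynomial representations and flag varieties. Once this is established, the higher coherences match automatically: on the Hall side they are produced by the Waldhausen / 2-Segal machinery of \autoref{geoHall}, and on the induction side they reduce to associativity of induction along nested inclusions $R(\nu)\otimes R(\nu')\otimes R(\nu'')\hookrightarrow R(\nu+\nu'+\nu'')$, both of which are pinned down by the common bimodule data.
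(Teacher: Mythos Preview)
Your reduction to comparing bimodules is the right first move, and the paper does the same. But the key claim you make — that chasing through Varagnolo--Vasserot yields an isomorphism $M_{\nu,\nu'}\cong R(\nu+\nu')$ of $(R(\nu+\nu'),R(\nu)\otimes R(\nu'))$-bimodules — is false in general. The object $X_\nu\star X_{\nu'}$ is only a \emph{proper} direct summand of $X_{\nu+\nu'}$: it is the summand indexed by those sequences $\underline{\alpha}$ with $|\underline{\alpha}|=\nu+\nu'$ that arise as concatenations $\underline{\beta}\,\underline{\gamma}$ with $|\underline{\beta}|=\nu$ and $|\underline{\gamma}|=\nu'$, and not every sequence is of this form (e.g.\ for $\nu=\epsilon_1$, $\nu'=\epsilon_2$ the sequence $(\epsilon_2,\epsilon_1)$ is not). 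Hence $M_{\nu,\nu'}=\Ext^*(X_{\nu+\nu'},X_\nu\star X_{\nu'})$ sits inside $R(\nu+\nu')=\Ext^*(X_{\nu+\nu'},X_{\nu+\nu'})$ as a proper summand, and no amount of chasing will produce a bimodule isomorphism.

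The paper's argument supplies exactly the missing observation: the complementary summand of $R(\nu+\nu')$ is annihilated by the $R(\nu)\otimes R(\nu')$-action, because the unit of $R(\nu)\otimes R(\nu')$ is the idempotent projecting onto the concatenation summand. Therefore tensoring over $R(\nu)\otimes R(\nu')$ with the complement gives zero, and tensoring with $M_{\nu,\nu'}$ agrees with tensoring with all of $R(\nu+\nu')$, i.e.\ with induction. This is a one-line direct argument that avoids the issue you flag as ``the main obstacle'' (monoidality of the Varagnolo--Vasserot equivalence): rather than invoking that compatibility, it proves it at the level of the representing bimodules. Your route, by contrast, effectively assumes what is to be shown.
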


\begin{proof}
The bimodule $\Ext^*(X_{\nu+\nu'},X_\nu\otimes X_{\nu'})$ corresponds to a summand of $R(\nu+\nu')$, whose complement is annihilated by the left action of  $R(\nu)\otimes R(\nu')$, and so tensoring with this bimodule is isomorphic to induction.
\end{proof}

\begin{Remark}
In \cite[\S 4.9]{VaragnoloVasserot} Varagnolo and Vasserot also note that the multiplication is given by induction, and prove it by comparing K-classes on generators.
\end{Remark}

\begin{Remark}
\label{remFormality}
Note that the KLR categorification is given in terms of graded projective modules over the algebras $Ext^*(X_{\nu},X_{\nu})$. An important point is that while the bimodules defining the monoidal structure are all formal, and so can be considered as graded $R(\nu)$ modules, the same is not true for the bimodules defining comultiplication (see \cite[\S 3.7]{SchiffmannHallCategory}). This is another reason to think that in order to categorify the bialgebra structure properly, it is essential to work with modules over $\AinfEnd(X_{\nu})$ instead.
\end{Remark}

\subsection{Connection to the classical Hall algebra}
\label{secHallConnection}
We would like to briefly summarize the connection between the category $\HallCat$ from \autoref{defH} and the Ringel-Hall algebra. 

Recall the notion of the classical Hall algebra associated to a finitary abelian category $\CCC$: 

\begin{Definition}
\label{defClassicHall}
Let $\CCC$ be a finitary abelian category i.e. $\CCC$ has finitely many simple isomorphism classes and the groups $\Hom$ and $\Ext^{1}$ are finite. We define the \emph{Hall algebra} of $\CCC$, $\HallAlg$, to be the algebra given by
\begin{itemize}
    \item As a space $\HallAlg$ has a basis the isomorphism classes of $\CCC$.
    \item Multiplication is given by the formula
    \[
    [X]\cdot[Y]=\sum_{[Z]} \#\{
    X'\hookrightarrow Z|X'\cong X, Z/X\cong Y\}[Z]
    \]
\end{itemize}
\end{Definition}
$\CCC=\Rep_{\FF_q}(\Quiver)$ satisfies the the requirements above and therefore we can consider $\HallAlg$ associated to it.

\begin{Claim}
$\HallAlg$ is isomorphic to $K(\HallCat)$, the Grothendieck $K$-group of the category $\HallCat$.
\end{Claim}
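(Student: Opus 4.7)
The plan is to chain together \autoref{propQuantumGroup} with Ringel's classical theorem \cite{ringel1990hall}. \autoref{propQuantumGroup} provides an isomorphism of $\ZZ[\nu,\nu^{-1}]$-algebras $K(\HallCat) \cong \Zquantum$, while Ringel's theorem identifies $\HallAlg$ with a twisted form of the specialization $\dot{U}^-_{\sqrt{q}}(\mathfrak{g}_{\Quiver})$ of $\Zquantum$ at $\nu = \sqrt{q}$. Specializing the first isomorphism at $\nu = \sqrt{q}$ and composing with Ringel's identification then yields the claim, after matching the Ringel twist with the shift-as-$\nu$ structure on the K-theory side.

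A more direct geometric approach bypasses the quantum group intermediary via Grothendieck's sheaf-function correspondence. Working with the $l$-adic variant of $\HallCat$ over $\overline{\FF_q}$ (for which the constructions of \autoref{WaldhausenStacksSection} through \autoref{secHallCat} go through verbatim with constructible sheaves in place of D-modules), each compact object $\Ff \in \HallCat$ produces a function on $\Ob_{\CCC}(\FF_q) = \bigsqcup_\alpha E^\alpha(\FF_q)/G(\FF_q)$ by taking Frobenius traces on stalks. This finite set is precisely the set of isomorphism classes appearing in \autoref{defClassicHall}, so one obtains a map to $\HallAlg$. Under this dictionary the geometric product $mid_* \circ end^*$ translates into the counting of subobjects: the fiber of $end$ over $([X],[Y])$ parametrizes short exact sequences with prescribed ends, and Frobenius traces of $mid_*$ then count middle terms with the correct Ringel multiplicities. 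That the resulting map descends to an isomorphism on K-theory follows from the description of generators of $\HallCat$ as IC sheaves of $G$-orbits together with Lusztig's parametrization.

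The main obstacle is the mismatch between the complex D-module setup used throughout the paper to define $\HallCat$ and the $\FF_q$ setting in which $\HallAlg$ is defined. Bridging this gap requires either replicating the construction in the $l$-adic setting and invoking a comparison of Grothendieck groups across the two sheaf-theoretic worlds, or black-boxing Ringel's and Lusztig's theorems while carefully tracking the normalization of the shift $[1]$ against the Frobenius weight $\sqrt{q}$ (the Ringel twist). The combinatorial matching of these two twists — i.e.\ showing that the shifts introduced by the relative dimensions of $mid$ and $end$ correspond on the function side to the powers of $\sqrt{q}$ in Ringel's product — is the delicate point.
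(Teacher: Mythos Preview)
Your second, geometric approach via the sheaf--function correspondence is essentially the route the paper takes, and your identification of the main obstacle (passing from the complex D-module setting to the $l$-adic one over $\overline{\FF_q}$) is exactly right. The paper packages the argument somewhat more cleanly: rather than checking by hand that $mid_*\circ end^*$ matches the Hall product on functions, it introduces a second theory with transfer $\DKTransfer$ (finitely supported functions on isomorphism classes, with $p_!$ given by fiberwise integration) and appeals to Dyckerhoff--Kapranov to identify $\Alg{\DKTransfer}$ with $\HallAlg$. The Frobenius trace $\Tr$ is then a \emph{natural transformation} $\KWeilTransfer\Rightarrow\DKTransfer$ of theories with transfer (the Grothendieck trace formula gives compatibility with $p_!$, and compatibility with $q^*$ is trivial), so the induced map on algebra objects is automatically an algebra homomorphism. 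This spares you the fiber-counting verification you sketch, and also makes the shift/$\sqrt{q}$ matching immediate from $\Tr(\FFF[1])=\sqrt{q}\cdot\Tr(\FFF)$.

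One point to be careful about: your final sentence asserts the trace map is an isomorphism onto all of $\HallAlg$, but the paper is more cautious here. What one actually obtains is an isomorphism of $K(\HallCat)\cong K(\QQQ)$ onto the \emph{spherical} subalgebra of $\HallAlg$ (generated by the classes of simples), which coincides with the full Hall algebra only in finite type. So the Claim as literally stated should be read with that caveat, and your argument via ``IC sheaves of $G$-orbits together with Lusztig's parametrization'' would likewise only give surjectivity in the finite-orbit case. Your first approach via Ringel's theorem runs into the same issue: Ringel identifies $U^-_{\sqrt q}$ with the composition subalgebra, not the whole Hall algebra, outside finite type.
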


This statement becomes more transparent if we reformulate the definition of $\HallAlg$ in the language of theory with transfer. Such a reformulation appears in  \cite{KapranovDyckerhoff}. Let us rephrase it using our construction: 

For a stack $X$ defined over $\field$ we define $\StackFunc(X)$ to be the space of finitely supported $\CC$-valued functions on the set of isomorphism classes of $X(\field)$.
We now extend this to a monoidal functor from $\Corr(\Stacks)_{smooth,proper}$ to $\Vect$ which we denote $\DKTransfer$. 

 By \autoref{HereditarySm-PrProposition} the functor $\geoHall$ associated to the category $\Rep(\Quiver)$ lands in $\Corr(\Stacks)_{smooth,proper}$. To a correspondence \[
\stik{1}{
{} \& Z \ar{dl}[above, xshift=-0.5em]{q} \ar{dr}{p} \& {} \\
X \& {} \& Y
}
\]
in $\Corr(\Stacks)_{smooth,proper}$ we first assign the correspondence of groupoids
\[
\stik{1}{
{} \& Z(\FF_q) \ar{dl}[above, xshift=-0.5em]{q(\FF_q)} \ar{dr}{p(\FF_q)} \& {} \\
X(\FF_q) \& {} \& Y(\FF_q)
}
\]
and then we assign the map $\StackFunc(X)\to\StackFunc(Y)$ given by $p_!\circ q^*$ where $q^*$ is the pullback of functions and $p_!$ is "integration along fibres" which is given by the sum $p_!(f)([A])=\sum_{[B]}|\Aut_{X_A}(B)|^{-1}\cdot f(B)$ taken over all isomorphism classes in the groupoid fiber $X_A$.

\begin{Remark}
For the above construction to make sense, all quantities involved need to be finite, and all the resulting functions should be finitely supported. This is ensured by the requirement that the original maps are smooth or proper and that the field is finite.
\end{Remark}

$\DKTransfer$ defined in this way provides a \emph{theory with transfer} in the sense of \cite{KapranovDyckerhoff}.

We then have

\begin{unnu-theorem}[Dyckerhoff-Kapranov]
The algebra $\Alg{\DKTransfer}$ determined by $\DKTransfer\circ \geoHall$ is canonically isomorphic to $\HallAlg$.
\end{unnu-theorem}

Using this reformulation we can expect the relation between $K(\HallCat)$ and $\HallAlg$ to be given by considering the stacks that appear in our construction $\geoHall$ over the field $\kk$ with positive characteristic $q$ rather than over $\CC$ and the $G$-equivariant derived categories $\DShladic{X}$ of $l$-adic sheaves on these stacks. The advantage of working in this setting is that we can use the sheaf-to-function correspondence to relate between $K(\HallCat)$ and $\HallAlg$. 

The same construction as in \autoref{defQ} produces Lusztig's category $\QQQ$ in this setting and we have the isomorphism of $\ZZ[\nu,\nu^{-1}]$-algebras 
\[K(\HallCat)\cong K(\QQQ)\]

It was proven by Lusztig in \cite{LusztigQuivers} that any $\FFF\in\DShladic{X}$ is equipped with a canonical Frobenius equivariant (Weil) structure, i.e. an isomorphism $\FFF\cong \Frob^*(\FFF)$. We can exploit the fact that isomorphism classes of $X(\field)$ are fixed by $\Frob$ to define a function $\Tr(\FFF)\in\StackFunc(X)$ by \[
\Tr(\FFF)(x)=\sum_i (-1)^i\Tr(\Frob|_{H^i(\FFF|_x)})
\]

This map has several useful properties:
 \begin{itemize}
     \item Let $p:X\to Y$ then by the Grothendieck trace formula $\Tr(p_!\FFF)=p_!\Tr(\FFF)$.
     \item $\Tr(\FFF)$ only depends on the Grothendieck class $[\FFF]$ of $\FFF$ and so $\Tr$ defines a map $K(\DShladic{X})\to\StackFunc(X)$.
     \item $\Tr(\FFF[1])=\sqrt{q}\cdot\Tr(\FFF)$
 \end{itemize}
 
 We can sum up as follows:
 
 \begin{Claim}
 $\Tr$ is a natural transformation between the two functors \[X\mapsto K(\DShladic{X})\otimes_{\ZZ[\nu,\nu^{-1}]}\CC=:\KWeilTransfer \text{\phantom{M} ($\nu$ acts on $\CC$ by $\sqrt{q}$)}\] 
 and 
 \[X\mapsto\StackFunc(X)=\DKTransfer(X)\]
 \end{Claim}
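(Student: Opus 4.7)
The plan is to first establish that $\Tr$ gives a well-defined $\CC$-linear map $\Tr_X : \KWeilTransfer(X) \to \DKTransfer(X)$ for each stack $X$ in the domain, and then verify naturality on the elementary correspondences $X \xleftarrow{q} Z \xrightarrow{p} Y$ with $q$ smooth and $p$ proper, from which naturality on the whole of $\Corr(\Stacks)_{smooth,proper}$ follows formally because the higher simplicial cells are built from pullback squares via \autoref{propTransfer}.

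For the first step, the second bullet tells us that $\Tr(\FFF)$ only depends on $[\FFF]$, so $\Tr$ descends to a homomorphism $K(\DShladic{X}) \to \StackFunc(X)$. The third bullet, $\Tr(\FFF[1])=\sqrt{q}\cdot\Tr(\FFF)$, shows that this homomorphism intertwines the action of $\nu$ on the source (via shift) with multiplication by $\sqrt{q}$ on the target, so by the universal property of base change of scalars it extends uniquely to a $\CC$-linear map
\[
\Tr_X : K(\DShladic{X})\otimes_{\ZZ[\nu,\nu^{-1}]}\CC \longrightarrow \StackFunc(X).
\]

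For the second step, I would verify naturality along an elementary correspondence. The morphism in $\KWeilTransfer$ sends $[\FFF]$ to $[p_! q^*\FFF]$, while in $\DKTransfer$ the map is $p_!\circ q^*$ on functions. The desired identity
\[
\Tr_Y(p_! q^*\FFF) \;=\; p_!\, q^*\, \Tr_X(\FFF)
\]
splits into two pieces: compatibility of $\Tr$ with smooth pullback, which is immediate from the pointwise definition $\Tr(\FFF)(x)=\sum_i(-1)^i\Tr(\Frob|_{H^i(\FFF|_x)})$ since a fibre of $q^*\FFF$ over $z$ is a fibre of $\FFF$ over $q(z)$; and compatibility of $\Tr$ with proper pushforward, which is exactly the first bullet, the Grothendieck--Lefschetz trace formula $\Tr(p_!\mathcal{G})=p_!\Tr(\mathcal{G})$ applied to $\mathcal{G}=q^*\FFF$.

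For the third step, a general morphism in $\Corr(\Stacks)_{smooth,proper}$ is a grid of pullback squares with smooth horizontals and proper verticals, and both $\KWeilTransfer$ and $\DKTransfer$ are obtained from the respective pullback/pushforward monoidal functors on $\Stacks$ by the recipe of \autoref{propTransfer}, which on higher cells is determined by the mate construction and base change. The elementary naturality verified in Step 2, combined with the fact that $\Tr$ is a natural transformation of the underlying object-level functors compatible with base change (pullback commutes with trace, and proper base change is compatible with the Grothendieck--Lefschetz formula), propagates to naturality on every higher cell.

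The main obstacle is Step 2, and specifically the proper pushforward case: one needs the Grothendieck--Lefschetz trace formula to hold in the stacky setting for the proper maps produced by $\geoHall$, together with the compatibility of the canonical Weil structure of \cite{LusztigQuivers} with $p_!$. Once the elementary trace identity is established at the level of functions on $\FF_q$-points, every further check reduces to manipulations of pullback squares that are already encoded in the definitions of $\KWeilTransfer$ and $\DKTransfer$, so no further geometric input is required.
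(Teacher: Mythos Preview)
Your proposal is correct and follows exactly the approach the paper takes: the paper does not give a separate proof of the Claim but presents it as a summary of the three bullet points immediately preceding it (dependence only on the K-class, compatibility of shift with multiplication by $\sqrt{q}$, and the Grothendieck trace formula for $p_!$), together with the obvious compatibility of $\Tr$ with pullback coming from its pointwise definition. You have simply made explicit the verification that these properties assemble into a natural transformation, including the extension to higher cells via \autoref{propTransfer}, which the paper leaves to the reader.
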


This morphism induces a morphism of algebras which, for $X=\Ob_\CCC$ descends to an isomorphism between $K(\HallCat) (\cong K(\QQQ))$ and a \emph{spherical subalgebra} of the Hall algebra of $\Rep_{\FF_q}\Quiver$ (see \cite[Theorem 3.24]{SchiffmannHallCategory}).

\printbibliography
\end{document}